\newtheorem{proposition}{Proposition}
\newtheorem{defin}{\bf Definition}
\newenvironment{proof}{\noindent{\bf Proof}}{$\diamond$}
\DeclareMathOperator{\sgn}{sgn}
\def\no{\mbox{N}}
\def\un{\mbox{Un}}
\def\P{\mbox{P}}
\def\d{\mbox{d}}
\def\data{\mbox{data}}
\def\bu{{\bf u}}
\def\bv{{\bf v}}
\def\bX{{\bf X}}
\def\bY{{\bf Y}}
\newcommand{\btheta}{\boldsymbol{\theta}}
\begin{document}

\baselineskip=24pt

\title{\bf A Bayesian semiparametric Archimedean copula}
\author{{\sc Ricardo Hoyos and Luis Nieto-Barajas} \\
{\sl Department of Statistics, ITAM, Mexico} \\
{\small {\tt ricardo.hoyos@itam.mx {\rm and} lnieto@itam.mx}}\\}
\date{}

\maketitle

\begin{abstract}
An Archimedean copula is characterised by its generator. This is a real function whose inverse behaves as a survival function. We propose a semiparametric generator based on a quadratic spline. This is achieved by modelling the first derivative of a hazard rate function, in a survival analysis context, as a piecewise constant function. Convexity of our semiparametric generator is obtained by imposing some simple constraints. The induced semiparametric Archimedean copula produces Kendall's tau association measure that covers the whole range $(-1,1)$. Inference on the model is done under a Bayesian approach and for some prior specifications we are able to perform an independence test. Properties of the model are illustrated with a simulation study as well as with a real dataset. 
\end{abstract}

\noindent {\sl Keywords}: Archimedean copula, Bayes nonparametrics, piecewise constant, survival analysis, quadratic spline.

\noindent {\sl AMS Classification}: 60E05 $\cdot$ 62G05 $\cdot$ 62N86.

\section{Introduction}
\label{sec:intro}

Let $\varphi(\cdot)$ be a continuous, strictly decreasing function from $[0,1]$ to $[0,\infty)$ such that $\varphi(1)=0$. Let $\varphi^{-1}(\cdot)$ be the inverse or the pseudo-inverse of $\varphi$, where the latter is defined as zero for $t>\varphi(0)$. If $\varphi(t)\to\infty$ as $t\to 0^+$ the generator is called strict. For instance, $\varphi(t)=-\log(t)$, is an example of a strict generator. An Archimedean copula $C(u,v)$ with generator $\varphi$ is a function from $[0,1]^2$ to $[0,1]$  defined as 
\begin{equation}
\label{eq:arqc}
C(u,v)=\varphi^{-1}\left(\varphi(u)+\varphi(v)\right). 
\end{equation}
A further requirement for \eqref{eq:arqc} to be well defined is that $\varphi$ must be convex \citep[e.g.][]{nelsen:06}. 

There are many properties that characterize Archimedean copulas, for instance, they are symmetric, associative and their diagonal section $C(u,u)$ is always less than $u$ for all $u\in(0,1)$. Generators $\varphi(\cdot)$ are usually parametric families defined by a single parameter. Most of them are summarised in \cite[][Table 4.1]{nelsen:06} and few of them are also included in Table \ref{tab:parcopulas}. 

Association measures induced by Archimedean copulas are a function of the generator. For instance, Kendall's tau becomes 
\begin{equation}
\label{eq:ktau}
\kappa_\tau=1+4\int_0^1\frac{\varphi(t)}{\varphi^{\prime}(t^+)}\d t,
\end{equation}
where $\varphi^{\prime}(t^+)$ denotes the right derivative of $\varphi$ at $t$. 

In this work we propose a Bayesian semiparametric generator defined through a quadratic spline. Within a survival analysis context, we model the first derivative of a hazard rate function with a piecewise constant function. The hazard rate and the cumulative hazard functions become linear and quadratic continuous functions, respectively. The induced survival function is used as an inverse generator for an Archimedean copula. Convexity constraints are properly addressed and inference on the model is done under a Bayesian approach. 

Other studies on semiparametric generators for Archimedean copulas can be found in \cite{genest&rivest:93} where their model is based on an  empirical Kendall's process. A new approach and extensions of this latter methodology can be found in \cite{genest&al:11}. In \cite{guillote&perron:15} the model arises from the one-to-one correspondence between an Archimedean generator and a distribution function of a nonnegative random variable. In particular they use a mixture of P\'olya trees as a prior for the corresponding distribution function under a Bayesian nonparametric approach. In a work more related to ours, \cite{vandenhende&lambert:05} use the relationship between quantile functions and Archimedean generators to define a semiparametric generator by supplementing a parametric generator with $n+1$ dependence parameters. Differing to their work, our model is not based on any parametric generator and the Kendall's tau can take values on the whole interval $(-1,1)$. 

The contents of the rest of the paper is as follows. In Section \ref{sec:model} we present our proposal and characterise its properties. In Section \ref{sec:post} we provide details of how to make posterior inference under a Bayesian approach. In Section \ref{sec:illust} we illustrate the performance of our model with a simulation study as well as with a real data set. We conclude with some remarks in Section \ref{sec:concl}.

Before proceeding we introduce notation: $\un(a,b)$ denotes a continuous uniform density on the interval $(a,b)$; and $\no(\mu,\sigma^2)$ denotes a normal density with mean $\mu$ and variance $\sigma^2$.

\section{Model}
\label{sec:model}

To elicit our proposal we noticed that $\varphi^{-1}$ is a decreasing function from $[0,\infty)$ to $[0,1]$, so it behaves as a survival function, in a failure time data analysis context \citep[e.g.][]{klein&moeschberger:03}. The idea is to propose a semi/non parametric form for the inverse generator $\varphi^{-1}$ by using survival analysis ideas. For that we recall some basic definitions. 

Let $h(t)$ be a nonnegative function with domain in $[0,\infty)$ such that $H(t)=\int_0^t h(s)\d s$ satisfies $H(t)\to\infty$ as $t\to\infty$. Then $S(t)=\exp\{-H(t)\}$ is a decreasing function from $[0,\infty)$ to $[0,1]$, so it behaves like an inverse generator $\varphi^{-1}(t)$. In a survival analysis context, functions $h(\cdot)$, $H(\cdot)$ and $S(\cdot)$ are the hazard rate, cumulative hazard and survival functions, respectively. 

In particular, if $h(t)=\theta$, i.e. constant for all $t$, then $S(t)=e^{-\theta t}$. If we take $\varphi(t)^{-1}=e^{-\theta t}$, then $\varphi(t)=-(\log t)/\theta$. Using \eqref{eq:arqc} we obtain that the resulting copula $C(u,v)=uv$ is the independence copula, and what is interesting, is that it does not depend on $\theta$. 

\subsection{Main proposal}
\label{subsec:main}

Using the previous ideas we construct a semiparametric generator in the following way. We first consider a partition of size $K$ of the positive real line, with interval limits given by $0=\tau_0<\tau_1<\cdots<\tau_K=\infty$. Then, we define the first derivative of the hazard rate, as a piecewise constant function of the form 
\begin{equation}
\label{eq:hp}
h'(t)=\sum_{k=1}^K \theta_k I(\tau_{k-1}<t\leq\tau_k),
\end{equation}
where  $\theta_K\equiv 0$.
We recover the hazard rate function as $h(t)=\int_0^t h'(s)\d s+\theta_0$, where $h(0)=\theta_0>0$ is an initial condition. Using \eqref{eq:hp}, the hazard rate becomes a piecewise linear function of the form
\begin{equation}
\label{eq:h}
h(t)=\sum_{k=1}^K \left(A_k+\theta_k t\right) I(\tau_{k-1}<t\leq\tau_k),
\end{equation}
where $A_1=\theta_0$ and $A_k=\theta_0+\sum_{j=1}^{k-1}(\theta_j-\theta_{j+1})\tau_j$, for $k=2,\ldots,K$. 

Integrating now the hazard function \eqref{eq:h}, the cumulative hazard  is a piecewise quadratic function given by 
\begin{equation}
\label{eq:H}
H(t)=\sum_{k=1}^K \left(B_k+A_k t+\frac{\theta_k}{2} t^2\right) I(\tau_{k-1}<t\leq\tau_k),
\end{equation}
where $B_1=0$ and $B_k=\sum_{j=2}^k(\theta_j-\theta_{j-1})\tau_{j-1}^2/2$, for $k=2,\ldots,K$. 

We therefore define a semiparametric inverse generator as the induced survival function, which can be written as
\begin{equation}
\label{eq:phiinv}
\varphi^{-1}(t)=\exp\{-H(t)\},
\end{equation}
where $H(t)$ is given in  \eqref{eq:H}. We now study some properties of this inverse generator. 

\begin{proposition}
\label{prop:1}
Consider the semiparametric inverse generator $\varphi^{-1}(t)$, given in \eqref{eq:phiinv}, and assume that $\{\theta_k, k=0,1,\ldots,K\}$ are such that $\theta_0>0$, $\theta_K=0$ and satisfy conditions (C1) and (C2) given by
\begin{enumerate}
\item[(C1)] $A_k+\theta_k t > 0$, for $t\in(\tau_{k-1},\tau_k]$ and for all $k=1,\ldots,K$.
\item[(C2)] $(A_k + \theta_k t)^2 > \theta_k$, for $t\in(\tau_{k-1},\tau_k]$ and for all $k=1,\ldots,K$.
\end{enumerate}
Then,
\begin{enumerate}
\item[(i)] $\varphi^{-1}(t)$ is a continuous and injective function of $t$, 
\item[(ii)] $\varphi^{-1}(t)$ is a convex function, 
\item[(iii)] $\varphi^{-1}(t)$ induces a strict generator. 
\end{enumerate}
\end{proposition}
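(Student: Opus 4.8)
The plan is to work throughout with the inverse generator in its exponential form $\varphi^{-1}(t)=\exp\{-H(t)\}$, since all three claims reduce to analytic properties of the piecewise-quadratic exponent $H$ and of the piecewise-linear hazard $h=H'$. The constants $A_k$ and $B_k$ in \eqref{eq:h} and \eqref{eq:H} are engineered precisely so that $h$ and $H$ glue continuously across the knots $\tau_1,\ldots,\tau_{K-1}$, so the first task is to confirm this. A direct computation of the jump $A_{k+1}-A_k=(\theta_k-\theta_{k+1})\tau_k$ shows $h(\tau_k^+)=h(\tau_k^-)$, so $h$ is continuous; feeding this together with $B_{k+1}-B_k=(\theta_{k+1}-\theta_k)\tau_k^2/2$ into \eqref{eq:H} shows $H(\tau_k^+)=H(\tau_k^-)$, so $H$ is continuous. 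This already yields the continuity of $\varphi^{-1}=\exp\{-H\}$ claimed in (i).

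For the remaining half of (i), I would argue that $\varphi^{-1}$ is a continuous, strictly decreasing bijection from $[0,\infty)$ onto $(0,1]$, because then its inverse $\varphi$ is automatically continuous (a continuous strictly monotone function on an interval has a continuous inverse). Strict monotonicity of $\varphi^{-1}$ follows from $H$ being strictly increasing, i.e. from $h(t)>0$. On $(\tau_{k-1},\tau_k]$ condition (C1) gives $h(t)=A_k+\theta_k t\ge 0$, while (C2) gives $(A_k+\theta_k t)^2>\theta_k$; combining the two forces $h(t)>0$ throughout the interval except possibly at an isolated endpoint (the only place a linear piece can vanish), which is enough for $H$ to be strictly increasing and for $\varphi^{-1}$ to be strictly decreasing.

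Part (ii) is where the two structural conditions do their real work. On each open interval $(\tau_{k-1},\tau_k)$ the function $\varphi^{-1}$ is smooth, with $(\varphi^{-1})'(t)=-h(t)\exp\{-H(t)\}$ and, differentiating once more, $(\varphi^{-1})''(t)=\exp\{-H(t)\}\left\{h(t)^2-h'(t)\right\}=\exp\{-H(t)\}\left\{(A_k+\theta_k t)^2-\theta_k\right\}$, which is strictly positive exactly by (C2). Thus $\varphi^{-1}$ is strictly convex on each piece. The delicate point, and the step I expect to be the main obstacle, is promoting this piecewise convexity to convexity on all of $[0,\infty)$: the second derivative does not exist at the knots, so one cannot simply invoke $(\varphi^{-1})''\ge 0$ globally. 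The remedy is to observe that $(\varphi^{-1})'=-h\exp\{-H\}$ is continuous across the knots (precisely because $h$ is continuous, which is where the definition of $A_k$ re-enters), so $(\varphi^{-1})'$ is continuous on $[0,\infty)$ and strictly increasing on each subinterval, hence non-decreasing throughout; a $C^1$ function whose derivative is non-decreasing is convex, giving (ii).

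Finally, for (iii) I would show the generator is strict, i.e. $\varphi(0)=\infty$, equivalently $\varphi^{-1}(t)\to 0$ as $t\to\infty$ while $\varphi^{-1}(t)>0$ for every finite $t$. On the last interval $\theta_K=0$, so $h(t)\equiv A_K$; conditions (C1) and (C2) then read $A_K\ge 0$ and $A_K^2>0$, forcing $A_K>0$, whence $H(t)$ grows linearly and $H(t)\to\infty$. Since $\varphi^{-1}=\exp\{-H\}$ is positive and finite-valued for all $t<\infty$ and tends to $0$ at infinity, the generator $\varphi$ is strict. As a byproduct, $H(0)=B_1=0$ gives $\varphi^{-1}(0)=1$, i.e. $\varphi(1)=0$, and convexity of the decreasing bijection $\varphi^{-1}$ transfers to $\varphi$, so the induced $C(u,v)$ in \eqref{eq:arqc} is a genuine copula.
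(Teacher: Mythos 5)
Your proof is correct and follows essentially the same route as the paper's: continuity of $h$ (and hence $H$) via the telescoping definition of $A_k$, convexity from $(\varphi^{-1})''=e^{-H}\{h^2-h'\}>0$ under (C2), and strictness from (C1) plus the linear growth of $H$ on the last interval where $\theta_K=0$. You additionally patch a few points the paper leaves implicit — the continuity of $\varphi$ itself, the promotion of piecewise convexity to global convexity across the knots via continuity of $(\varphi^{-1})'$, and the derivation of $A_K>0$ from (C1)--(C2) rather than asserting it — which are welcome refinements of, not departures from, the paper's argument.
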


\begin{proof}
For (i) we know that $h'(t)$, as in \eqref{eq:hp}, is a piecewise constant discontinuous function, however, function $h(t)$, as in \eqref{eq:h}, is continuous. To see this, for each $k=1,\ldots,K$, the limit from the left is $\lim_{t\to\tau_k^{-}} h(t) = \lim_{t \to \tau_k^{-}} A_k + \theta_k t = A_k + \theta_k \tau_k$, and the limit from the right becomes $\lim_{t \to \tau_k^{+}} h(t) = \lim_{t \to \tau_k^{+}} A_{k+1} + \theta_{k+1} t = A_{k+1} + \theta_{k+1} \tau_k$. Since $A_{k+1} = A_{k} + (\theta_{k} - \theta_{k+1})\tau_{k}$, then both limits coincide. For the second part of (i), we have that  $H(t)$ is a monotonous function whose derivative is strictly positive, due to condition (C1), therefore $H(t)$ is injective and invertible on its image \citep{rudin:87}. For (ii) we take the second derivative of $\varphi^{-1}(t)$ which becomes $\varphi^{-1(\prime \prime)}(t) =  \{h(t)\}^2 \exp\{-H(t)\} - h^{\prime}(t)\exp\{-H(t)\}$, this is  positive if and only if $\{h(t)\}^2-h'(t)>0$. For this to happen we require condition $(C2)$.  For (iii), $\varphi^{-1}(t)$ must be a proper survival function, that is, $h(t)$ must be nonnegative, which is achieved by imposing condition $(C1)$. Furthermore, we need $\lim_{t\to\infty}\varphi^{-1}(t)=0$, which is equivalent to prove that  $\lim_{t\to\infty}H(t)=\lim_{t\to\infty}\left(B_K+A_K t+\theta_K t^2/2\right)=\infty$. This is true since $B_K$ is a finite constant, by definition $\theta_K=0$, and this together with $(C2)$ imply $A_K>0$, so the linear part goes to infinity when $t\to\infty$. 
\end{proof}

By property (i) in Proposition \ref{prop:1}, we can invert equation \eqref{eq:phiinv} to obtain an expression for the generator. This is given by
\begin{align}
\nonumber
\varphi(t)=\sum_{k=1}^{K}&\left(\left[\sgn(\theta_k)\left\{\frac{2}{\theta_k}\left(\frac{A_k^2}{2\theta_k}-B_k-\log(t)\right)\right\}^{1/2}-\frac{A_k}{\theta_k}\right]I(\theta_k\neq 0)\right. \\
\label{eq:phi}
&\left.\hspace{5mm}-\frac{B_k+\log(t)}{A_k}I(\theta_k=0)\right)
I\left(\varphi^{-1}(\tau_k)\leq t< \varphi^{-1}(\tau_{k-1})\right).
\end{align}

The value $K$ controls the flexibility of the generator, and thus of the copula. If $K=1$, the induced Archimedean copula is the independence copula, whereas for larger $K$, the generator, and the induced copula, become semiparametric. Potentially $K$ could be infinite implying a nonparametric model. We now discuss some association properties of our semiparametric generator. 

To see the kind of association induced by our proposal, we computed the Kendall's tau using expression \eqref{eq:ktau} with generator \eqref{eq:phi}. This is given in the following result. 
\begin{proposition}
\label{prop:kt}
The Kendall's tau obtained by the Archimedean copula with semiparametric generator \eqref{eq:phi} is given by 
$$\kappa_\tau = -1+2\sum_{k=1}^K A_k \int_{\tau_{k-1}}^{\tau_k}\exp\left(-2B_k-2A_k t-\theta_k t^2\right)\d t.$$
Moreover, this $\kappa_\tau\in(-1,1)$.
\end{proposition}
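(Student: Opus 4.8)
The plan is to start from the generic Archimedean expression \eqref{eq:ktau} and pull the integral over $[0,1]$ back onto the survival-time axis $[0,\infty)$ through the change of variables $t=\varphi^{-1}(x)=\exp\{-H(x)\}$. By Proposition \ref{prop:1} the inverse generator is $C^1$ with $(\varphi^{-1})'(x)=-h(x)\exp\{-H(x)\}$, so wherever $h>0$ the inverse-function relation gives $\varphi'(t^+)=1/(\varphi^{-1})'(x)$ at $t=\varphi^{-1}(x)$, while $\varphi(t)=x$. As $t$ runs from $0$ to $1$ the new variable $x$ runs from $\infty$ to $0$; substituting $\d t=(\varphi^{-1})'(x)\,\d x$, the ratio $\varphi(t)/\varphi'(t^+)$ contributes one factor $(\varphi^{-1})'(x)$ and the differential another, so that $$\int_0^1\frac{\varphi(t)}{\varphi'(t^+)}\,\d t=-\int_0^\infty x\,h(x)^2\exp\{-2H(x)\}\,\d x,$$ and hence $\kappa_\tau=1-4\int_0^\infty x\,h(x)^2\,e^{-2H(x)}\,\d x$, an expression involving only $h$ and $H$.

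Next I would convert this into the stated piecewise form by integration by parts. Writing the integrand as $(xh)\cdot\big(h\,e^{-2H}\big)$ and using $h\,e^{-2H}=-\tfrac12\tfrac{\d}{\d x}e^{-2H}$, the boundary terms vanish (at $x=0$ because of the factor $x$, and at $x=\infty$ because on the last interval $\theta_K=0$, $A_K>0$, so $xh\,e^{-2H}$ decays exponentially), leaving $\int_0^\infty x h^2 e^{-2H}\,\d x=\tfrac12\int_0^\infty (h+xh')e^{-2H}\,\d x$. On $(\tau_{k-1},\tau_k]$ one has $h(x)=A_k+\theta_k x$ and $h'(x)=\theta_k$, so $h+xh'=A_k+2\theta_k x=2h-A_k$; combined with the elementary identity $\int_0^\infty h\,e^{-2H}\,\d x=\tfrac12$ (again from $h\,e^{-2H}=-\tfrac12(e^{-2H})'$), this collapses to $\int_0^\infty x h^2 e^{-2H}\,\d x=\tfrac12-\tfrac12\int_0^\infty A_k\,e^{-2H}\,\d x$, with $A_k$ read piecewise. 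Substituting back and recognising that $2H(t)=2B_k+2A_k t+\theta_k t^2$ on the $k$-th interval yields exactly $\kappa_\tau=-1+2\sum_{k=1}^K A_k\int_{\tau_{k-1}}^{\tau_k}e^{-2B_k-2A_k t-\theta_k t^2}\,\d t$.

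For the range I would argue directly from $\kappa_\tau=1-4J$ with $J=\int_0^\infty x h^2 e^{-2H}\,\d x$. Condition (C1) gives $h\ge0$, and since $h(0)=\theta_0>0$ with $h$ continuous, the nonnegative integrand is strictly positive near $x=0$, forcing $J>0$ and hence $\kappa_\tau<1$. For the lower bound I would use the by-parts identity in the form $J=\tfrac14+\tfrac12\int_0^\infty x\,h'(x)e^{-2H}\,\d x$ and invoke condition (C2) written as $h(x)^2>h'(x)$: multiplying by $x\,e^{-2H}\ge0$ and integrating gives $J>\int_0^\infty x h' e^{-2H}\,\d x$, and feeding this back into the identity forces $\int_0^\infty x h' e^{-2H}\,\d x<\tfrac12$, whence $J<\tfrac12$ and therefore $\kappa_\tau>-1$.

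The step I expect to be most delicate is the opening change of variables: one must track the reversal of the limits of integration, see that the inverse-function relation $\varphi'(t^+)=1/(\varphi^{-1})'(x)$ combines with $\d t=(\varphi^{-1})'(x)\,\d x$ to produce the squared factor $h^2 e^{-2H}$, and verify integrability at $x=\infty$. The genuinely non-obvious ingredient is the appeal to (C2) in the form $h^2>h'$ to secure the lower bound; once that is in hand, the remainder is bookkeeping with the piecewise constants $A_k$, $B_k$ and $\theta_k$.
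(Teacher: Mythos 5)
Your proposal is correct, and the derivation of the displayed formula follows essentially the same route as the paper: both reduce \eqref{eq:ktau} to $\kappa_\tau=1-4\int_0^\infty t\{\varphi^{-1(\prime)}(t)\}^2\,\d t$ (the paper states this rewriting without the change-of-variables bookkeeping you supply, then says ``doing the integral we obtain the expression''; your integration by parts with $h\,e^{-2H}=-\tfrac12(e^{-2H})'$ and the identity $h+th'=2h-A_k$ is exactly the computation that makes that step explicit). Where you genuinely depart from the paper is the range claim. The paper re-expresses $\kappa_\tau=-2\int_0^\infty t\,h'(t)e^{-2H(t)}\,\d t$ and argues by cases on the sign of $h'$: it observes that $h'>0$ everywhere gives $\kappa_\tau<0$ and $h'\le 0$ everywhere gives $\kappa_\tau\ge 0$, but it does not actually justify the strict bound $\kappa_\tau>-1$, nor does it address the generic situation in which the $\theta_k$ have mixed signs so that $h'$ changes sign. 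Your argument closes both gaps: strict positivity of $J=\int_0^\infty t h^2 e^{-2H}\,\d t$ near $t=0$ (from $\theta_0>0$ and continuity of $h$) gives $\kappa_\tau<1$, and feeding condition (C2) in the form $h^2>h'$ into the by-parts identity $J=\tfrac14+\tfrac12\int_0^\infty t h' e^{-2H}\,\d t$ yields $J<\tfrac12$ and hence $\kappa_\tau>-1$, with no case split on the sign of $h'$. This is a cleaner and more complete justification of the second assertion of the proposition than the one in the paper, and it makes visible that (C2) --- the same convexity condition used in Proposition \ref{prop:1} --- is precisely what keeps $\kappa_\tau$ strictly above $-1$. (One cosmetic point: the inequality $x h^2 e^{-2H}\ge x h' e^{-2H}$ is only weak at $x=0$, so you should note that strictness of the integrated inequality follows because the pointwise inequality is strict on a set of positive measure; this is immediate but worth a clause.)
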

\begin{proof}
Rewriting expression \eqref{eq:ktau} in terms of the inversed generator we obtain $\kappa_\tau=1-4\int_0^\infty t\{\varphi^{-1(\prime)}(t)\}^2\d t$. Computing the derivative we get $\varphi^{-1(\prime)}(t)=-\sum_{k=1}^K(A_k+\theta_k t)\times$ $\exp\{-(B_k + A_k t+\theta_k t/2)\}I(\tau_{k-1}<t\leq\tau_k)$. Doing the integral we obtain the expression. To obtain the range of possible values of $\kappa_\tau$ it is easier to re-write $\kappa_\tau$  in terms of $h(t)$ and $H(t)$. This becomes $\kappa_{\tau} = -2 \int_0^{\infty} t h^{\prime}(t)\exp\{-H(t)\}\,dt$. Here it is straightforward to see that the sign of $\kappa_\tau$ is determined by the sign of $h'(t)$, therefore $h'(t)>0$ for all $t$ implies $-1 < \kappa_{\tau} < 0$ and $h'(t)\le 0$ implies $0 \le\kappa_{\tau}<1$. 
\end{proof}

The expression for $\kappa_\tau$ tells us that the concordance induced by our semiparametric copula is a function of both, the parameters $\{\theta_k\}$, as well as of the partition limits $\{\tau_k\}$. It depends on a definite integral and can be evaluated numerically. What is more important is that $\kappa_\tau$ covers the whole range from $-1$ to $1$, showing that our proposal is very flexible. 

To illustrate the flexibility of our model we define a partition of the positive real line of size $K=10$, such that $\tau_k=-\log(1-k/10)$ for $k=0,1,\ldots,10$. We consider two scenarios for the values of the parameters $\{\theta_k\}$. The first scenario is defined by $\theta_k<0$ for all $k<K$, whereas the second scenario contains $\theta_k>0$ for all $k<K$. Conditions $(C1)$ and $(C2)$ were satisfied in both cases. Figure \ref{fig:ilust1} contains functions $h'(t)$, $H(t)$ and $\varphi^{-1}(t)$ for two different scenarios, the solid (blue) line corresponds to the first scenario and the dotted (red) line to the second scenario. In the first case the corresponding hazard function (middle panel) is decreasing, whereas for the second case the hazard function is increasing. The induced concordance values are $\kappa_\tau=0.368$ and $\kappa_\tau=-0.202$, respectively. 

As a second example, we consider a partition of size $K=50$, such that $\tau_k = -\log(1 - k/50)$ for $k=0,1,\ldots,50$. We consider three different scenarios for the parameters $\{\theta^{(i)}_k\}$ with $i=1,2,3$, respectively. In the first scenario we assume $\theta_1^{(1)}\sim \un(-1,1)$, in the second $\theta_1^{(2)}\sim \un(-50,0)$ and in the third $\theta_1^{(3)}\sim \un(0,1)$. Posteriorly, we define sequentially $\theta_k^{(i)}\sim \un(a_k^{(i)},b_k^{(i)})$ with $a_k^{(i)}$ and $b_k^{(i)}$ constants such that constraints $(C1)$ and $(C2)$ are satisfied, for $k=2,\ldots,K-1$ and $i=1,2,3$. We repeated sampling from these distributions a total of 5,000 times, and for each repetition we computed $\kappa_\tau$. The induced histogram densities  for the three scenarios are presented in Figure \ref{fig:simkt}. For the first scenario, the values of $\kappa_\tau$ range from $-0.3$ to $0.4$, showing that our model can capture both negative and positive concordance measures. For the second scenario, the values of $\kappa_\tau$ are all positive and the distribution is right skewed, and for the third scenario the values of $\kappa_\tau$ are all negative showing a left skewed distribution. 

According to \cite{nelsen:06}, new generators can be defined if we apply a scale transformation of the form $\phi^{-1}(t)=\varphi^{-1}(\beta t)$ if and only if $\phi(t)=\varphi(t)/\beta$, for $\beta>0$, where $\phi(t)$ becomes a new Archimedean copula generator. More recently, \cite{dibern&rulli:13} realised that the new generator $\phi(t)$ induces exactly the same copula \eqref{eq:arqc} as that obtained with $\varphi(t)$. To see this we have that $C_\phi(u,v)=\phi^{-1}(\phi(u)+\phi(v))=\varphi^{-1}\left(\beta\left\{\frac{1}{\beta}\varphi(u)+\frac{1}{\beta}\varphi(v)\right\}\right)=C_\varphi(u,v)$. In other words, an Archimedean copula generator is not unique. 

Moreover, in terms of the hazard rate functions, $h_\phi(t)$ and $h_\varphi(t)$, induced by generators $\phi$ and $\varphi$, respectively, the relationship becomes $h_\phi(t)=\beta h_\varphi(\beta t)$. In order to make our semiparametric generator identifiable, without loss of generality, we impose the new constraint  
\begin{enumerate}
\item[(C3)] $\theta_0=1$.
\end{enumerate}
This constraint is equivalent to imposing $h(0)=1$ in definition \eqref{eq:h}. 

\subsection{Alternative construction}
\label{subsec:alternative}

Instead of starting with a piecewise function for the derivative of a hazard rate, we could start by defining a piecewise constant function for the hazard rate itself. That is $h(t)=\sum_{k=1}^K\theta_k I(\tau_{k-1}<t\leq\tau_k)$ with $\theta_k>0$, and $\{\tau_k\}$ a partition of the positive real line. In this case the cumulative hazard function becomes $H(t)=\sum_{j=1}^{k}\theta_j\Delta_j+\theta_{k}(t-\tau_{k-1})$, for $t\in(\tau_{k-1},\tau_{k}]$, with $\Delta_j=\tau_{j}-\tau_{j-1}$. The inverse generator is then a linear spline of the form 
\begin{equation}
\nonumber
\varphi^{-1}(t)=\exp\left\{-\sum_{k=1}^K\theta_k w_k(t)\right\},
\end{equation}
with $$w_k(t)=\left\{\begin{array}{ll}
\Delta_k, & t>\tau_k \\
t-\tau_{k-1} & t\in(\tau_{k-1},\tau_k] \\
0 & \mbox{otherwise} 
\end{array}\right.$$
and the corresponding Archimedean generator has the form
\begin{equation}
\nonumber
\varphi(t)=\sum_{k=1}^K \left\{\tau_{k-1}-\frac{1}{\theta_{k}}(\log t+\vartheta_{k-1})\right\}I(\vartheta_{k-1}<-\log t\leq\vartheta_k),
\end{equation}
with $\vartheta_k=\sum_{j=1}^{k}\theta_j\Delta_j$. To ensure convexity of the generator we further require $\theta_1\geq\theta_2\geq\cdots\geq\theta_K$. Furthermore, the Kendall's tau has a simpler expression
\begin{equation}
\nonumber
\kappa_\tau=1+\sum_{k=1}^K\left\{e^{-2\vartheta_k}(1+2\theta_k\tau_k)-e^{-2\vartheta_{k-1}}(1+2\theta_k\tau_{k-1})\right\}.
\end{equation}

However, it can be shown that this expression for the Kendall's tau only allows positive values, constraining the possible associations captured by the model. Therefore, in the remainder of the paper we will concentrate on our main proposal defined in Section \ref{subsec:main}.

\section{Posterior inference}
\label{sec:post}

The copula density $f_C(u,v)$, of an Archimedean copula, can be obtained by taking the second crossed derivatives with respect to $u$ and $v$ in expression \eqref{eq:arqc}. In terms of the generator and its inverse this density becomes
\begin{equation}
\label{eq:cdensity}
f_C(u,v)=\varphi^{-1(\prime\prime)}\left(\varphi(u)+\varphi(v)\right)
\varphi^{(\prime)}(u)\varphi^{(\prime)}(v),
\end{equation}
where the single and double primes denote first and second derivatives, respectively, and are given by
$$\varphi^{-1(\prime\prime)}(t)=\sum_{k=1}^K \left\{(A_k+\theta_k t)^2-\theta_k\right\}\exp\left\{-\left(B_k+A_k t+\frac{\theta_k}{2}t^2\right)\right\}I(\tau_{k-1}<t\leq\tau_K)$$
and
$$\varphi^{(\prime)}(t)=-\sum_{k=1}^{K}\frac{1}{t}\left(-2\theta_k B_k+A_k^2-2\theta_k\log(t)\right)^{-1/2}I\left(\varphi^{-1}(\tau_k)\leq t< \varphi^{-1}(\tau_{k-1})\right).$$

Let $(U_i,V_i)$, $i=1,\ldots,n$ be a bivariate sample of size $n$ from $f_C(u,v)$ defined in \eqref{eq:cdensity}. With this we can construct the likelihood for $\btheta=(\theta_0,\theta_1,\ldots,\theta_K)$ as $\mbox{lik}(\btheta\mid\bu,\bv)=\prod_{i=1}^n f_C(u_i,v_i\mid\btheta)$, where we have made explicit the dependence on $\btheta$ in the notation of the copula density. Recall that the parameter space $\Theta$ contains the values of $\btheta$ that satisfy several conditions, $(C1)$ and $(C2)$ given in Proposition \ref{prop:1}, $(C3)$ to make our generator unique, and $\theta_K=0$. 

We assume a joint prior distribution for $\btheta$ of the form 
\begin{equation}
\label{eq:prior}
f(\btheta)\propto\prod_{k=1}^{K-1} \left\{\pi_0 I(\theta_k=0)+(1-\pi_0)\no(\theta_k\mid \mu_0,\sigma^2_0)\right\} I(\btheta\in\Theta). 
\end{equation}

Note that we explicitly allow the $\theta_k$'s, for $k=1,\ldots,K-1$ to be zero with positive probability $\pi_0$. This prior choice is useful to define an independence test. Specifically, we consider the hypotheses $H_0:U \mbox{ and } V$ independent, which is equivalent to $H_0:\theta_1=\cdots=\theta_{K-1}=0$, versus the alternative $H_1:U \mbox{ and } V$ dependent, which is equivalent to $H_1:\theta_k\neq 0$ for at least one $k=1,\ldots,K-1$. To perform the test we can compute the posterior probabilities of $H_0$ and $H_1$ and make the decision using decision theory \citep{degroot:04}, or use the corresponding Bayes factor \citep{kass&raftery:95}, which in case that $\P(H_0)=\P(H_1)$, this becomes the odds in favour of $H_1$, that is, $B_{10}=\P(H_1\mid\data)/\P(H_0\mid\data)$. Here we follow the approach of \cite{filippi&al:16} and report $\P(H_1\mid\data)$ as an evidence in favour of dependence.

The posterior distribution of $\btheta$ is simply given by the product of expressions \eqref{eq:cdensity} and \eqref{eq:prior}, up to a proportionality constant. It is easier to characterize the posterior distribution by implementing a Gibbs sampler \citep{smith&roberts:93} and sampling from the conditional posterior distributions 
\begin{equation}
\label{eq:postc}
f(\theta_k\mid \btheta_{-k},\data)\propto\mbox{lik}(\btheta\mid\bu,\bv)f(\btheta),
\end{equation}
for $k=1,\ldots,K-1$. However, sampling from conditional distributions \eqref{eq:postc} is not trivial since the parameter $\theta_k$ appears everywhere in the likelihood, the parameter space is complex and no closed expression can be obtained for the normalising constant, we therefore propose a Metropolis-Hastings step \citep{tierney:94} by sampling $\theta_k^*$ at iteration $(r+1)$ from a random walk proposal distribution 
$$q(\theta_k\mid\btheta_{-k},\theta_k^{(r)})=\pi_1 I(\theta_k=0)+(1-\pi_1)\un(\theta_k\mid \max\{a_k,\theta_k^{(r)}-\delta c_k\},\min\{b_k,\theta_k^{(r)}+\delta c_k\})$$
where the interval $(a_k,b_k)$ represents the conditional support of $\theta_k$, $c_k=b_k-a_k$ is its length, with $a_k = \max_{k \le j \le K-1} \left\{\left(\sqrt{\theta_{j+1}}I(\theta_{j+1}\geq 0) -\theta_0 - \sum_{i=1,i \ne k}^{j}(\tau_i - \tau_{i-1})\theta_i\right)/(\tau_{k} - \tau_{k-1})\right\}$, for $k=1,\ldots,K-1$, $b_k = \left(\theta_0 + \sum_{j=1}^{k-1} (\tau_j - \tau_{j-1})\theta_j \right)^2$, for $k=2,\ldots,K-1$, and $b_1 = 1$. The justification of these bounds obeys the inclusion of constraints $(C1)$ and $(C2)$ and their derivations are given in Appendix \ref{sec:appendix}. The parameters $\pi_1$ and $\delta$ are tuning parameters that control the acceptance rate. 

Therefore, at iteration $r+1$ we accept $\theta_k^*$ with probability 
$$p\left(\theta_k^*,\theta_k^{(r)}\right)=\min\left\{1\,,\;\frac{f(\theta_k^*\mid\btheta_{-k},\data)\,q(\theta_k^{(r)}\mid\btheta_{-k},\theta_k^{*})}{f(\theta_k^{(r)}\mid\btheta_{-k},\data)\,q(\theta_k^*\mid\btheta_{-k},\theta_k^{(r)})}\right\}.$$
This Metropolis-Hastings within Gibbs procedure to obtain posterior inference of our model was implemented in Python and the code is available upon request from the first author. 

To perform the independent test, posterior probability of $H_0$ can be approximated via Monte Carlo by using the MCMC posterior draws of the vector $\btheta$ and computing the relative frequency of the event $\theta_k=0$ for all $k=1,\ldots,K-1$, we therefore obtain posterior probability of $H_1$ by computing the complement.

\section{Numerical studies}
\label{sec:illust}

We illustrate the performance of our model in two ways, through a simulation study, and with a real data set. 

To define the partition $\{\tau_k\}$ of the positive real line, inspired by the generator of the product copula, we consider a Log-$\alpha$ partition defined by $\tau_k = - \alpha \log(1-k/K)$ for $k = 0,\ldots,K-1$, with $\alpha>0$. This partition is the result of transforming a uniform partition in the interval $[0,1]$ via a convex function. Larger values of $\alpha$ increase the spread of the partition along the positive real line. 

\subsection{Simulation study}

We generated simulated data from four parametric Archimedean copulas, namely the product, Clayton, Ali-Mikhail-Haq (AMH) and Gumbel copulas. Their features are summarised in Table \ref{tab:parcopulas}, where we include the parameter space, the generator, the inverse generator, an indicator whether the copula is strict or not and the induced $h(t)$ function obtained through inversion of relationship \eqref{eq:phiinv}.  

Note that, due to the nonunicity of an Archimedean generator, an equivalent constraint to $(C3)$ has to be imposed to the parametric generators that we are going to compare to. That is we set $h(0)=1$ for the product, Clayton and AMH copulas, and $h(\epsilon)=1$ for the Gumbel copula, for say $\epsilon=0.01$. The difference in the latter case is because, for a Gumbel copula, $h(t)\to\infty$ when $t\to 0$. These conditions are already included in the parametrisation used in Table \ref{tab:parcopulas}. 

For each parametric copula we took a sample of size $n=200$. To specify the copulas we took particular values in the parametric space that induce negative and positive dependence. In particular we set  $\theta\in\{-0.4,-0.8,0.6,1\}$ for the Clayton copula, $\theta\in\{-0.3,-0.7,0.3,0.7\}$ for the AMH copula, and $\theta\in\{1.4,2.0\}$ for the Gumbel copula. For the partition size we compared $K\in\{10,20\}$ and tried values $\alpha \in \{0.3,0.5,0.9,1,2,\ldots,10\}$.

For the prior distributions \eqref{eq:prior} we took $\pi_0=0, \mu_0=-1$ and $\sigma_0^2=10$. We implemented a MH step within the Gibbs sampler where the proposal distributions were specified by $\pi_1=0$ and $\delta=0.25$. The acceptance rate attained with these specifications are around 30\%, which according to \cite{robert&casella:10} are optimal for random walks. Finally, the chains were ran for 20,000 iterations with a burn-in of 2,000 and keeping one of every 5$^{th}$ iteration to produce posterior estimates. Convergence of the chains was assessed informally by looking at the trace and ergodic means plots. Computational times using an \emph{intel core i7 microprocessor} averaged 6 and 15 minutes for the partition sizes $K=10$ and $K=20$, respectively.

To assess goodness of fit (GOF) we computed several statistics. The logarithm of the pseudo marginal likelihood (LPML), originally suggested by \cite{geisser&eddy:79}, to assess the fitting of the model to the data. The supremum norm, defined by $\sup_{t}|\varphi^{-1}(t)-\widehat{\varphi}^{-1}(t)|$ to assess the discrepancy between our posterior estimate (posterior mean) $\widehat{\varphi}^{-1}(t)$ from the true inverse generator $\varphi^{-1}(t)$. We also computed the Kendall's tau coefficient and compare the point (posterior mean) and 95\% interval estimates with the true value. Additionally, as a graphical aid to see the performance of our model, we compare the posterior estimates, point (posterior mean) and 95\% pointwise credible intervals, of functions $h(t)$ and $\varphi^{-1}(t)$ with the true ones. In general, the idea of our model is to properly estimate the joint density of a particular dataset, say $f(u,v)$, but in Archimedean copulas such a density is characterised by the generator, like in \eqref{eq:cdensity}. This is why we concentrate on comparing the inverse generator and its associated hazard function. 

To avoid overwhelming the reader with many tables and graphs, we only show results for some of the simulated datasets to illustrate, the performance of our model in the other datasets not shown is analogous. The GOF statistics are shown in Tables \ref{tab:prod} to \ref{tab:gumbel14}. Although we fitted our model with all values of $\alpha$ mentioned above, we only show results for those around the best fitting model in the tables. Posterior estimates of the functions are depicted in Figures \ref{fig:prod} to \ref{fig:gumbel14}. Here we only show estimates with the best fitting model. 

For the product copula the GOF measures are presented in Table \ref{tab:prod}. With exception of the partition Log-$3$ for $K=20$, for all settings considered, the true $\kappa_\tau$ lies inside the 95\% credible intervals. The LPML chooses the model with Log-$1$ partitions of size $K=10$, and corresponds to the second smallest value of the supremum norm. Posterior estimates of functions $h(t)$ and $\varphi^{-1}(t)$ are shown in Figure \ref{fig:prod}. In both cases the true function lies inside the 95\% credible intervals. 

For the Clayton copula we have two choices of $\theta$, $-0.8$ and $1$. The first choice, $\theta=-0.8$, corresponds to a generator that is not strict, that is, $\varphi^{-1}(t)>0$ for $t\in[0,5/4]$, and $\varphi^{-1}(t)=0$ for $t>5/4$. This is an interesting challenge because our model defined only strict generators. The settings with smallest supremum norm, Log-$0.5$ with $K=10$, produces the 95\% credible interval for $\kappa_\tau$ closest to the true value, however it does not achieve the largest LPML. The inconsistency of the GOF measures might be due to
the non strictness feature of the true generator. Moreover, if we look at the graphs of the posterior estimates of $h(t)$ and $\varphi^{-1}(t)$ (Figure \ref{fig:clay_08}), for larger values of $t$ the true functions lie outside of our posterior estimates. For $\theta=1$, the best model is obtained with a Log-$6$ partition of size $10$. In this case, posterior estimates of functions $h(t)$ and $\varphi^{-1}(t)$ with the best fitting (Figure \ref{fig:clay1}), contain the true functions. 

For the AMH copula we have two values of $\theta$, $-0.7$ and $0.7$. The best fitting chosen by at least two of the three GOF criteria is obtained with a Log-$6$ and Log-$1$ partitions of size $K=10$, respectively for the two values of $\theta$. Posterior estimates of functions $h(t)$ and $\varphi^{-1}(t)$ with the best fitting are shown in Figures \ref{fig:amh_07} and \ref{fig:amh07}, respectively. In all cases the true functions lie within the 95\% credible intervals.

For the Gumbel copula with $\theta=1.4$ we have an interesting behaviour. The true $h(t)$ function has the feature that $h(0)=\infty$. This represents a challenge for our model since we have imposed the constraint $(C3)$ which is equivalent to $h(0)=1$. The highest LPML value is obtained with a Log-$7$ partition of size $K=10$, however the posterior 95\% credible interval for $\kappa_\tau$ does not contain the true value. On the other hand, the second best value of LPML is obtained with an Log-$3$ partition of  size $10$, and in this case the 95\% credible interval for $\kappa_\tau$ does contain the true value. We select this latter as the best fitting. Posterior estimates of functions $h(t)$ and $\varphi^{-1}(t)$ are shown in Figure \ref{fig:gumbel14}. Recalling that the true hazard function goes asymptotically to infinity when $t\to 0$, therefore, for values close to zero the true $h(t)$ lies outside our posterior credible intervals, something similar happens in the estimates of the inverse generator. Apart from this, our posterior estimates are very good for $t>\epsilon$. 

An important learning from the previous examples is that increasing the partition size does not necessarily imply better fitting. 

\subsection{Real data analysis}

In public health it is important to study the factors that determine the birth weight of a child. Low birth weight is associated with high perinatal mortality and morbility \citep[e.g.][]{stevens&orleans:01}. 
We study the dependence structure between the age of a mother ($X$) and the weight of her child ($Y$), and concentrated on mothers of 35 years old and above. The dataset was obtained from the General Hospital of Mexico through the open data platform that can be accessed at \emph{https://datos.gob.mx/busca/dataset/perfiles-metabolicos-neonatales/resource/4ab603eb-b73a-498f-8c56-0dc6d21930e8}. It contains $n=208$ records from the first sample of the neonatal metabolic profile of male babies registered in the year 2017 in Mexico City. 

The marginal distributions for variables $U$ and $V$, induced by copula \eqref{eq:arqc}, are uniform. In practice, copulas are used to model the dependence for any pair of random variables regardless of their marginal distributions. Let $X$ and $Y$ be two random variables with marginal cumulative distributions $F(x)$ and $G(y)$ respectively. Then the joint cumulative distribution function for $(X,Y)$ is obtained as \citep{sklar:59}, $H(x,y)=C(F^{-1}(x),G^{-1}(y))$, where $C$ is given in \eqref{eq:arqc}. 

Since we are just interested in modelling the dependence between $X$ and $Y$, it is common in practice to transform the original data, $(X_i,Y_i)$, $i=1,\ldots,n$, to the unit interval via a modified rank transformation \citep{Deheuvels:79} in the following way. Let $\bX'=(X_1,\ldots,X_n)$ and $\bY'=(Y_1,\ldots,Y_n)$ then $U_i=\mbox{rank}(i,\bX)/n$ and $V_i=\mbox{rank}(i,\bY)/n$ are the transformed data, where $\mbox{rank}(i,\bX)=k$ if and only if $X_i=X_{(k)}$ for $i,k=1,\ldots,n$. This is based on the probability integral transform using the empirical cumulative distribution function of each coordinate.

In Figure \ref{fig:realdata} we show a dispersion diagram of the original data (left panel) and the rank transformed data (right panel). To avoid problems due to ties in the original data, we first include a perturbation to the data by adding a uniform random variable  $\un(-0.0,0.01)$ to each coordinate. The sample Kendall's tau value for the transformed data is $\tilde{\kappa}_{\tau} = -0.1162$.

We fitted our model to the transformed data with the following specifications. To define the partitions we took values $\alpha \in \{0.3,0.5,0.9,1,2,\ldots,10\}$ with sizes $K\in\{10,20\}$.
For the prior we took $\pi_0=0$, $\mu_0=-1$ and $\sigma_0^2=10$. The MCMC specifications were the same as those used for the simulated data. 

The GOF measures computed were the LPML and the posterior estimates (point and 95\% credible interval) of $\kappa_\tau$. The results are reported in Table \ref{tab:realdata}. The best fitting model according to LPML is that obtained with a partition of size $K=10$ and Log-$10$. The sample concordance $\tilde{\kappa}_\tau$ is included in our posterior 95\% credible interval estimate $\kappa_\tau\in(-0.213,-0.098)$. 

The estimated hazard rate function $h(t)$ and the inverse generator $\varphi^{-1}(t)$, with the best fitting model, are included in the top row in Figure \ref{fig:perealdata}. The solid thick line corresponds to the point estimates and the solid thin lines to the 95\% credible intervals. For a visual comparison, the blue dotted line corresponds to the functions of the independence (product) copula. Additionally, we include an estimate of the joint density as well as the corresponding contour plots (bottom row in Figure \ref{fig:realdata}. These estimates suggest that there is a negative (weak) dependence between the age of the mother and the birth weight of the child. The older the mother, the less weight of the child. This finding could potentially help the policy makers to focus campaigns to help the awareness of future mothers.

\subsection{Independence test}

As mentioned in Section \ref{sec:model}, we can use our model to undertake an independence test. For that we choose the prior distribution for the $\theta_k$'s, as in \eqref{eq:prior}, such that the prior probability of  $H_0:\theta_1=\cdots=\theta_{K-1}=0$ is $1/2$, in other words, we want $\P(H_0)=\pi_0^{K-1}=1/2$. Particularly, for a partition of size $K=10$ we need to specify $\pi_0=0.9258$. In order to get a point of mass proposal in the MH step we consider $\pi_1 = 0.3$. We re-ran our model using these values with the other specifications left unchanged and performed the test for all simulated and real datasets. 

To place our test in context, we compare our results with the recently proposed independence test of \cite{filippi&al:16}, based on Dirichlet process mixture models. These authors actually proposed two tests, one based on a contingency table approach (CT) and another based on a mixture model approach (MM). Additionally, we implemented a frequentist test based on the empirical copula (EC) given in \cite{Deheuvels:79}. For the three Bayesian tests, ours (SPAC) and the other two competitors, we report $\P(H_1\mid\data)$, whereas for the frequentist test we report the p-values. All these values are included in Table \ref{tab:indep}. 

We first mention that the values $\P(H_1\mid\data)$ from the Bayesian tests have to be calibrated with respect to that obtained for the product (independent) dataset. The three tests assign small evidence of dependence to the product dataset, as it should be, whereas the frequentist test assigns a p-value of 0.15 to the same product dataset, which is large enough to not to reject the null hypothesis of independence. 

For the Clayton and Gumbel datasets, all four test are consistent giving enough evidence to dependence. For the AMH datasets we have mixed decisions. None of the four tests are able to detect dependence for the cases of $\theta=-0.3$ and $\theta=0.3$. This is understandable since the AMH copula produces data that look similar to the product copula for values of $\theta$ close to zero. For the other two values, $\theta=-0.7$ and $\theta=0.7$, the frequentist test EC does not detect dependence, however the Bayesian tests give more evidence of dependence, being our SPAC test the one that best supports dependence for these two datasets. 

Finally, for the real dataset, we also have mixed decisions. Tests CT and EC do not detect any dependence, however, our new test SPAC and MM give enough support to dependence, which is also consistent to the estimated generator obtained with our model and presented in the right panel of Figure \ref{fig:perealdata}.

\section{Concluding remarks}
\label{sec:concl}

We have proposed a semiparametric Archimedean copula that is flexible enough to capture the behaviour of several families of parametric Archimedean copulas. Our model is capable of modelling positive and negative dependence. The number of parameters in the model to produce a good estimation of the dependence in the data should not be extremely high. For most of the examples considered here ten parameters are enough. 

Defining an appropriate partition to analyse real data sets is not trivial. We suggest to try different values of $\alpha$ in a wide range and compare using a GOF criteria like the LPML we used here. 

Our proposal is also suitable to perform an independent test, which compares favourable with alternative independence tests. For the datasets considered here, our proposal assigned the largest evidence of dependence for the dependent datasets. 

In the exposition and in examples considered here, we concentrated on bivariate copulas, however extensions to more than two dimensions is also possible, say $C(u_1,\ldots,u_m)=\varphi^{-1}\left(\varphi(u_1)+\cdots+\varphi(u_m)\right)$. Performance of our semiparametric copula in this multivariate setting is worth studying.

Our model is motivated by semiparametric proposals for survival analysis functions \citep{nieto&walker:02} and appropriately modified to satisfy the properties of an Archimedean generator. The semiparametric generator presented here turned out to be based on quadratic splines, however, alternative proposals are possible as the one described in Section \ref{subsec:alternative}. 

Although, the motivation of our proposal lies within a survival analysis context, the inclusion of right censored observations into the analysis is not straightforward. The likelihood contribution would involve the cumulative distribution function of the bivariate density induced by the copula, and this is not available in closed form. A data augmentation technique, like those in \cite{tanner:91}, would be the way to proceed.

\section*{Acknowledgements}
This research was done while the first author was doing a post doctorate at the Department of Statistics, ITAM, and was supported by \textit{Asociaci\'on Mexicana de Cultura, A.C.} The authors are grateful to three anonymous referees for their insightful comments.

\appendix

\section*{Appendix}
\label{sec:appendix}

\textbf{Derivation of posterior conditional support of $\theta_k$.}

\noindent
In order to satisfy constraint $(C1)$, we consider first the case $\theta_k\leq 0$. Therefore \\
$\min_{t \in (\tau_{k-1},\tau_k]}A_k + \theta_k t = A_k + \theta_k \tau_{k}$. This implies the following constraint for $\theta_k$, 
$$\theta_k \geq \max_{k \le j \le K-1} \left\{- \left( \theta_0 + \sum_{i=1,i \ne k}^{j}(\tau_i - \tau_{i-1})\theta_i\right)/(\tau_{k} - \tau_{k-1})\right\},$$
for $k = 1,\ldots,K-1$, where we define the empty sum as zero. 

On the order hand, if $\theta_k > 0$ we have
$\min_{t \in (\tau_{k-1},\tau_k]}A_k + \theta_k t = A_k + \theta_k \tau_{k-1},$
and we get, from condition $(C2)$, the following restriction
$$\theta_k < \left(\theta_0 + \sum_{i=1}^{k-1} (\tau_i - \tau_{i-1})\theta_i \right)^2.$$
This defines the upper bound $b_k$, for $k = 2,\ldots,K-1$, and $b_1=1$. 

Because the term $\theta_k$ appears on the right side of the previous inequality for $j=k+1,\ldots,K-1$, we need to consider the following restriction
$$\theta_k > \left.\left( \sqrt{\theta_j} -  \theta_0 - \sum_{i=1,i \ne k}^{j-1}(\tau_i - \tau_{i-1})\theta_i\right)\right/(\tau_{k} - \tau_{k-1})$$
if $\theta_j \geq 0$. Combining this with the constraint when $\theta_k\leq 0$ above, we get the lower bound $a_k$ for $k=1,\ldots,K-1$.

\bibliographystyle{natbib}

\newpage

\begin{figure}[h]
\centerline{
\includegraphics[width=5.5cm,height=5.cm]{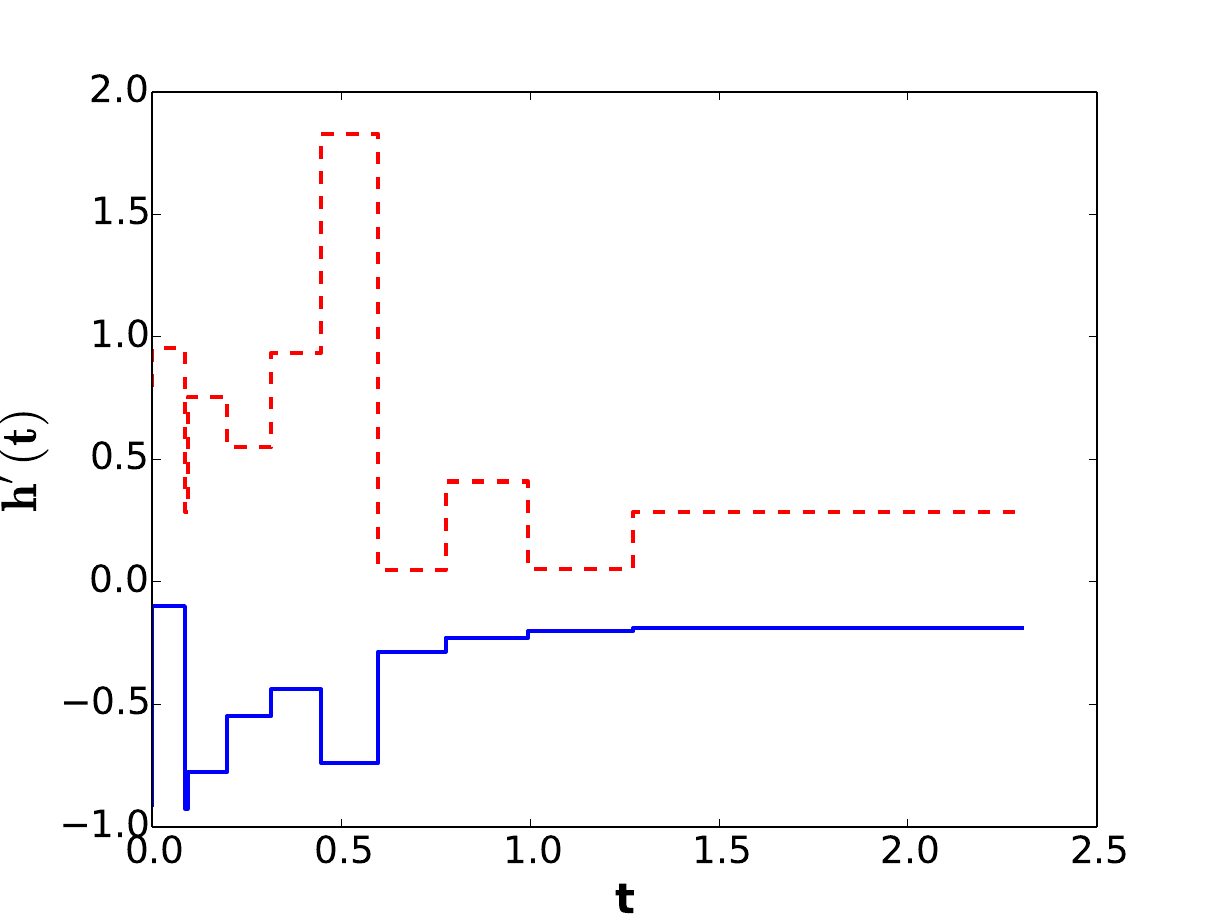}
\includegraphics[width=5.5cm,height=5.cm]{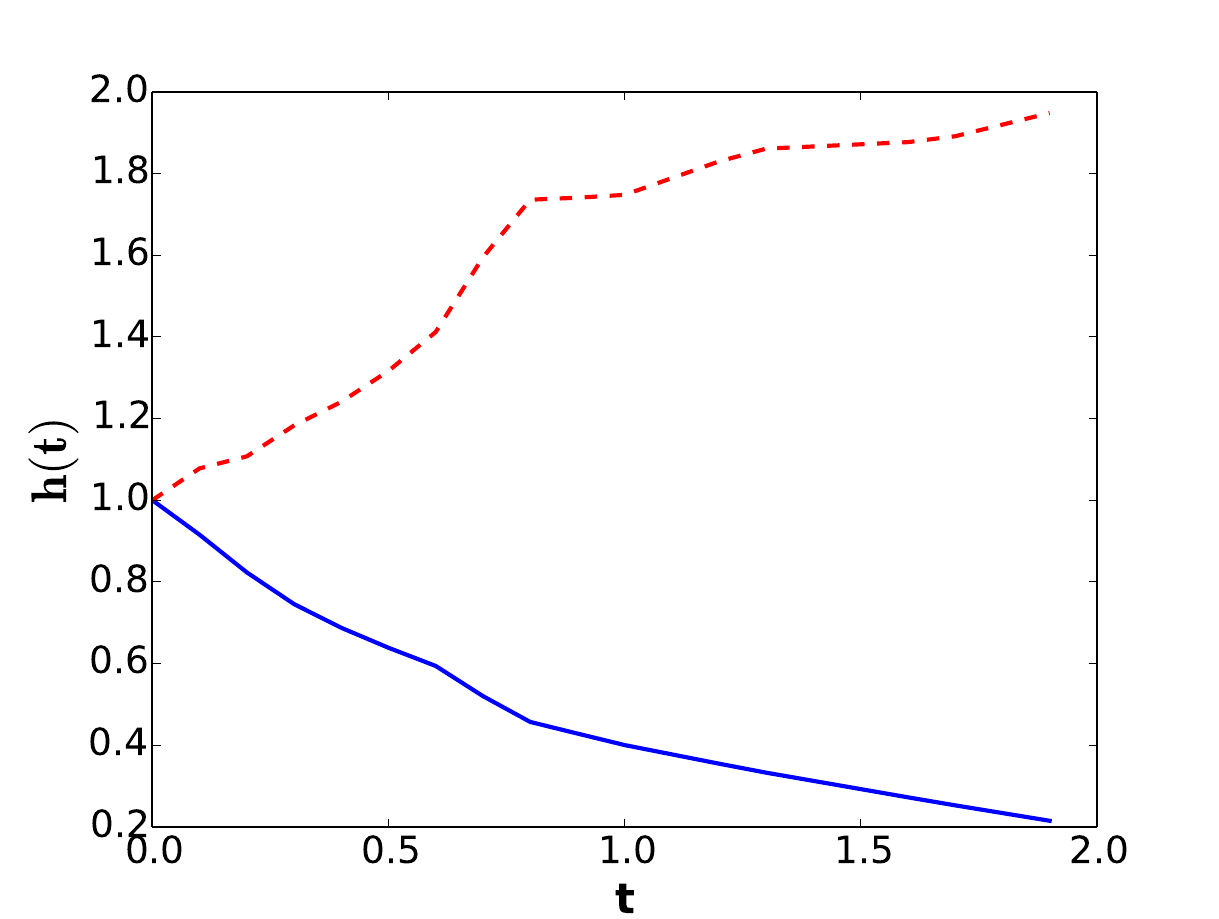}
\includegraphics[width=5.5cm,height=5.cm]{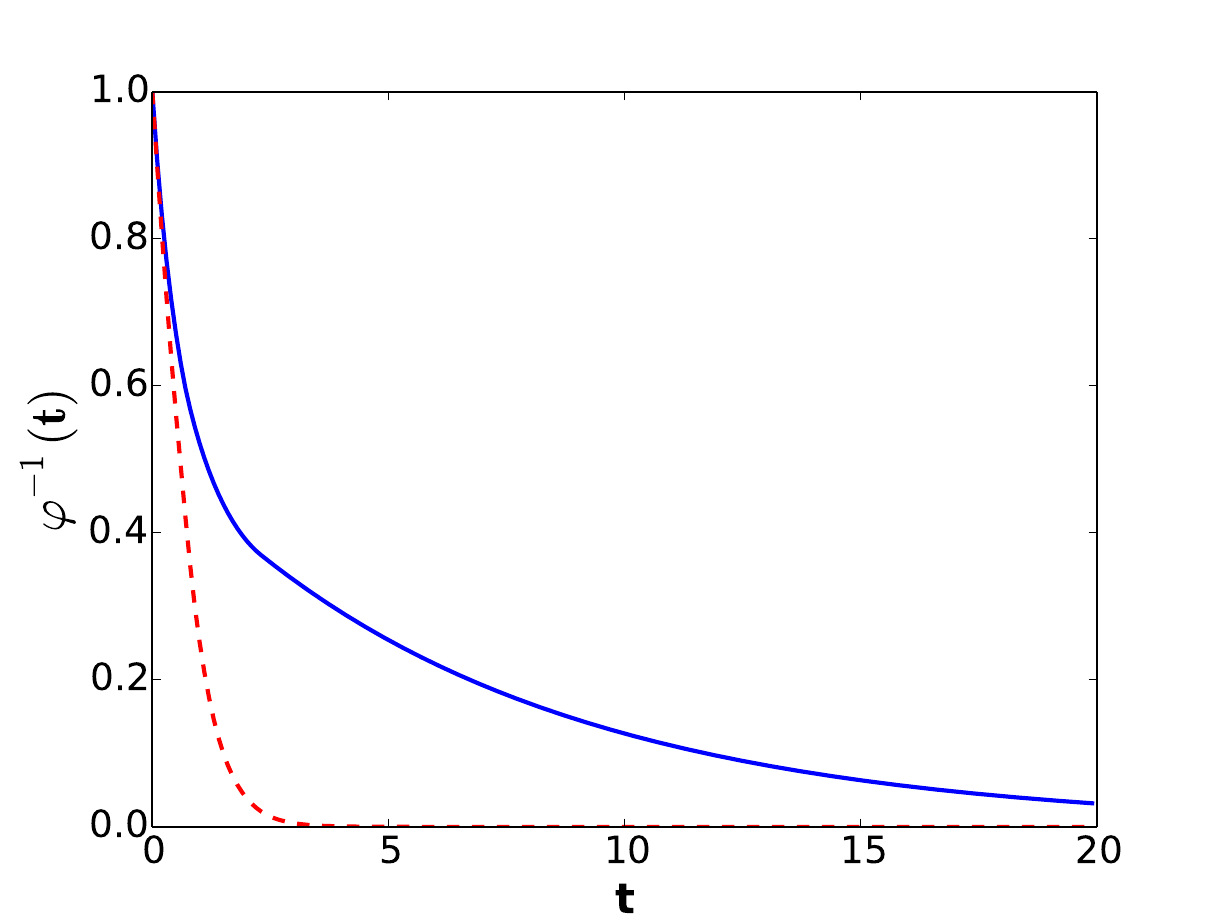}
}
\caption{Functions $h'(t)$ (first panel), $h(t)$ (second panel) and $\varphi^{-1}(t)$ (third panel) for two scenarios of $\{\theta_k\}$. All negative values (solid line), and all positive values (dotted line).}
\label{fig:ilust1}
\end{figure}

\begin{figure}[h]
\centerline{
\includegraphics[width=5.1cm,height=5.cm]{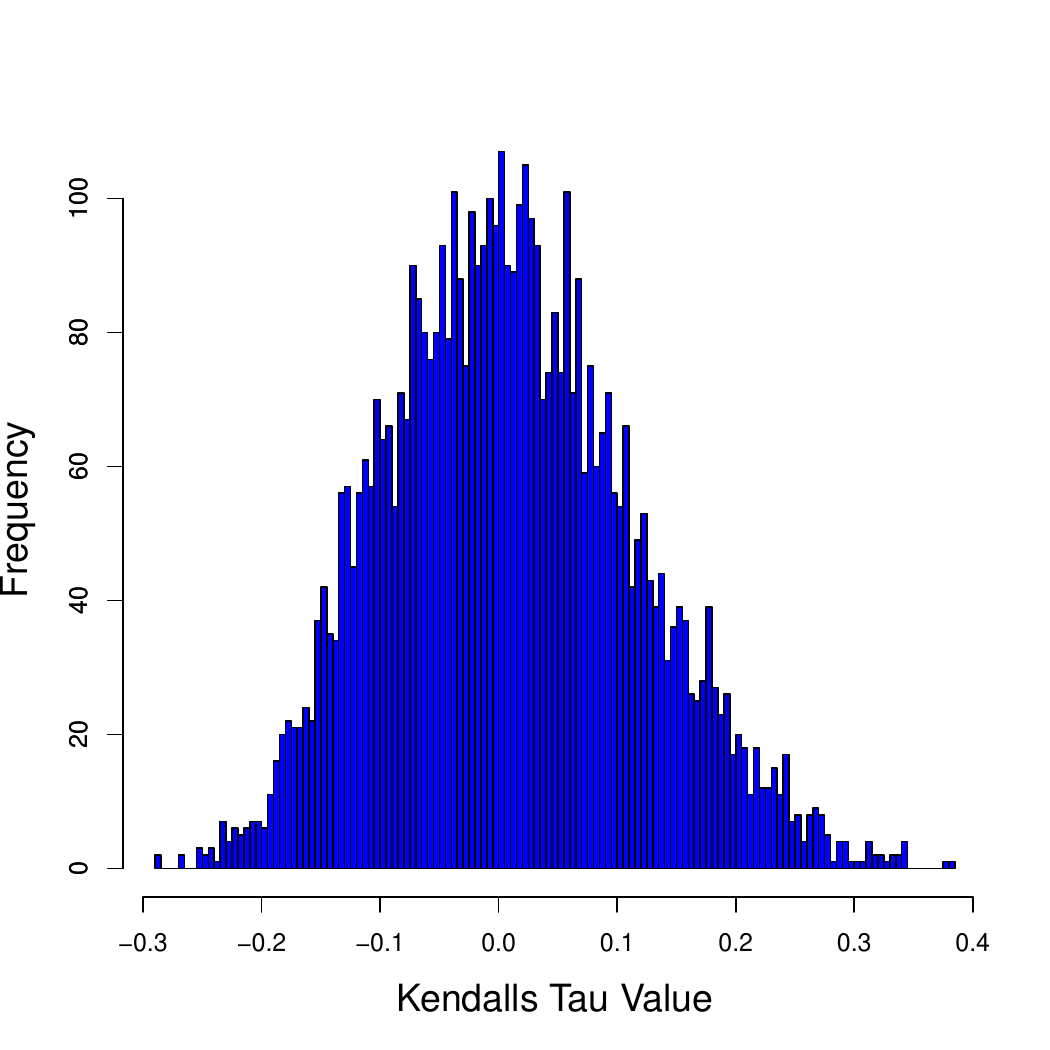}
\includegraphics[width=5.1cm,height=5.cm]{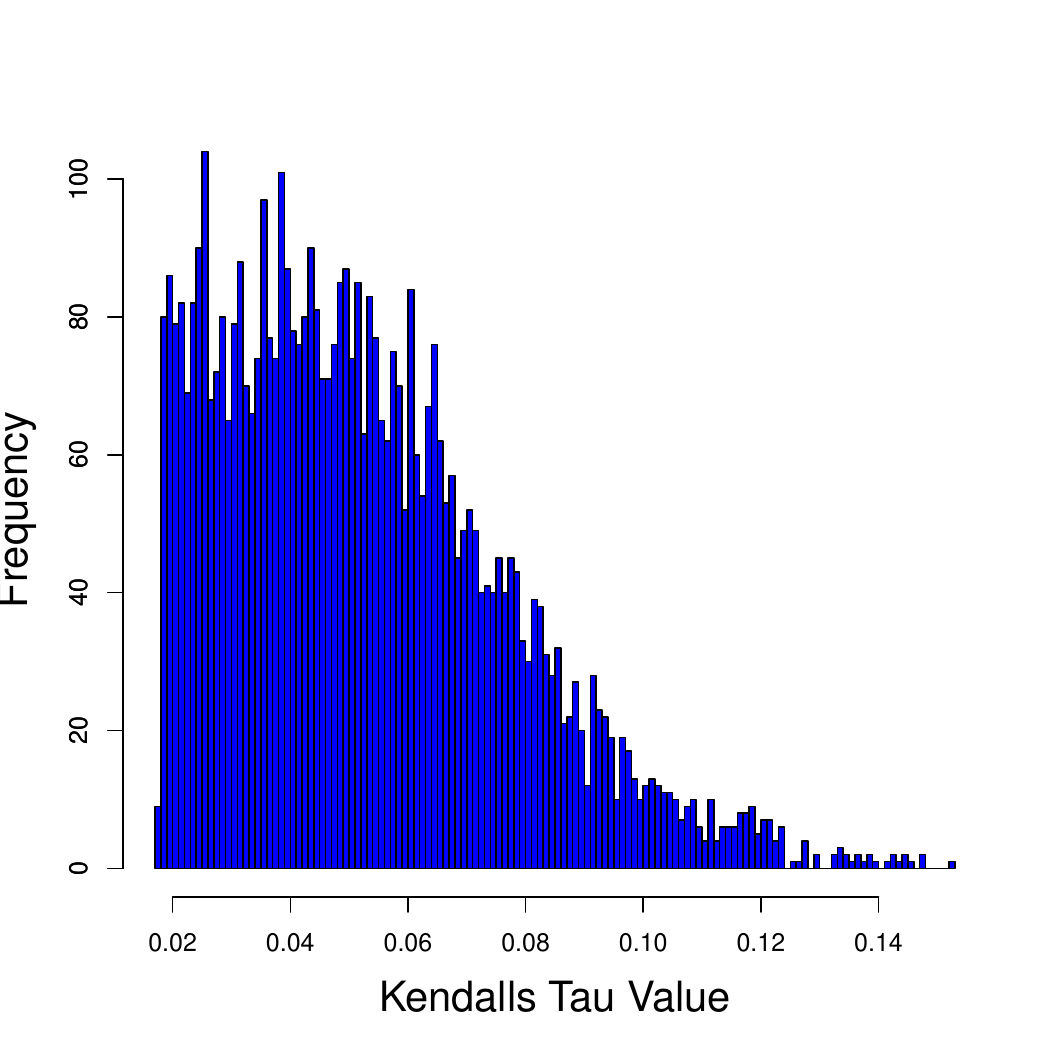}
\includegraphics[width=5.1cm,height=5.cm]{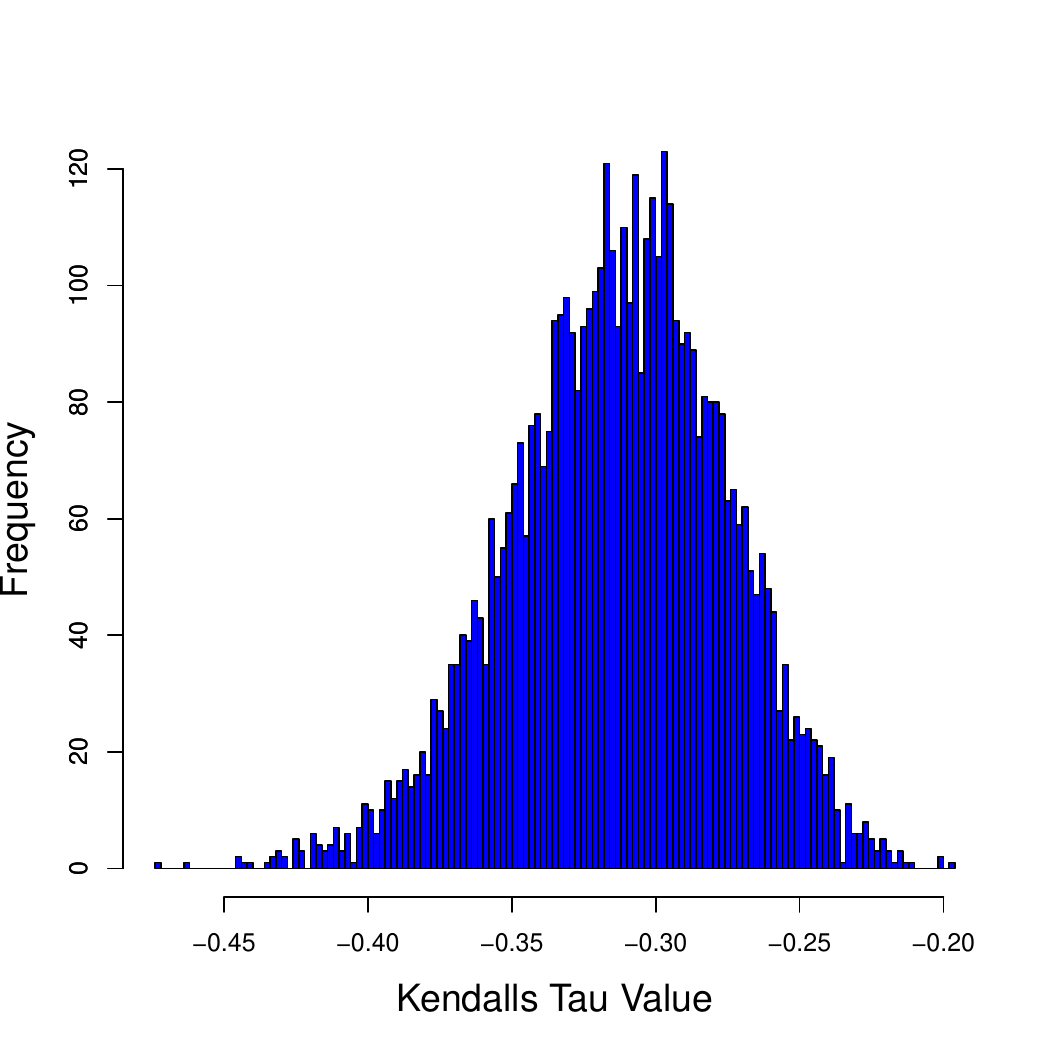}
}
\caption{Prior distributions of Kendall's tau, induced by our model, under three different scenarios.}
\label{fig:simkt}
\end{figure}

\begin{figure}[H]
\begin{center}
\includegraphics[width=6.5cm,height=6.0cm]{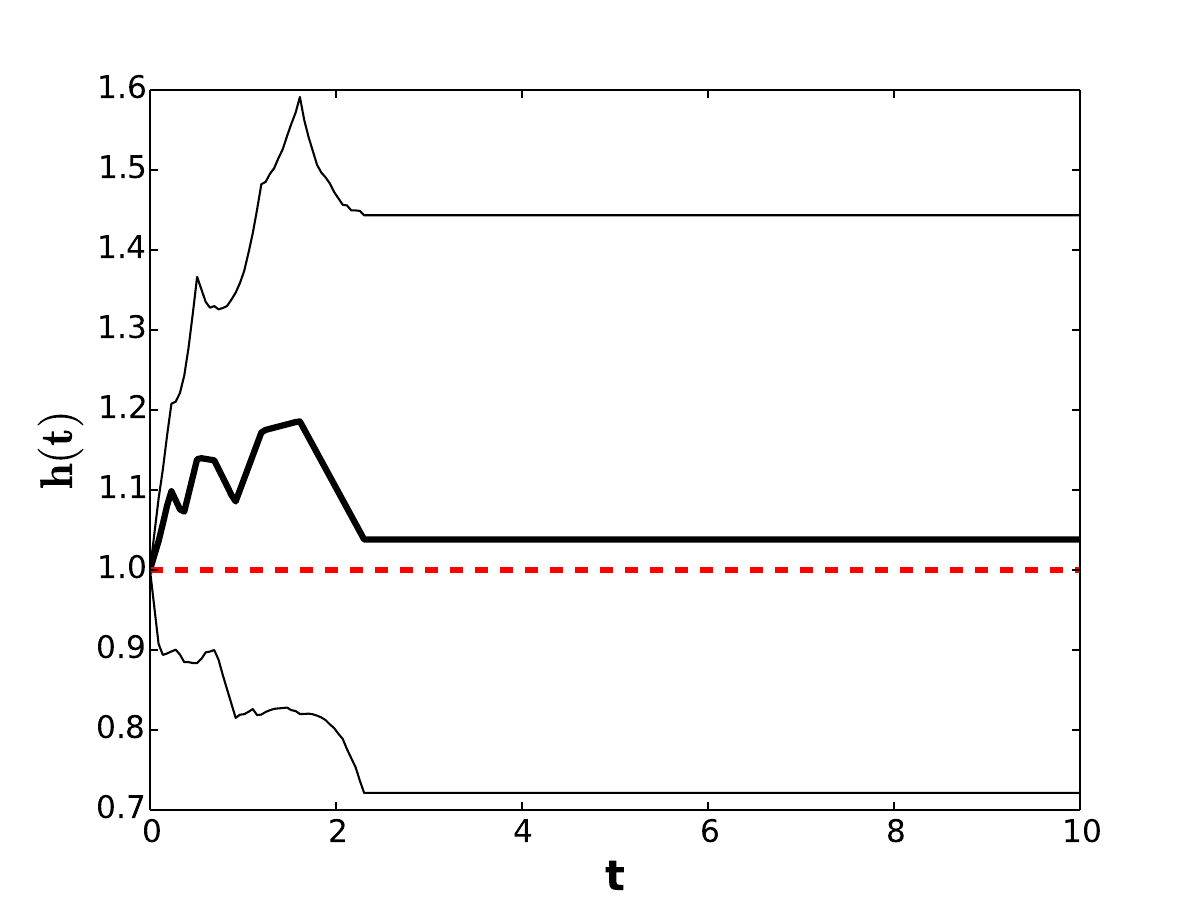}
\includegraphics[width=6.5cm,height=6.0cm]{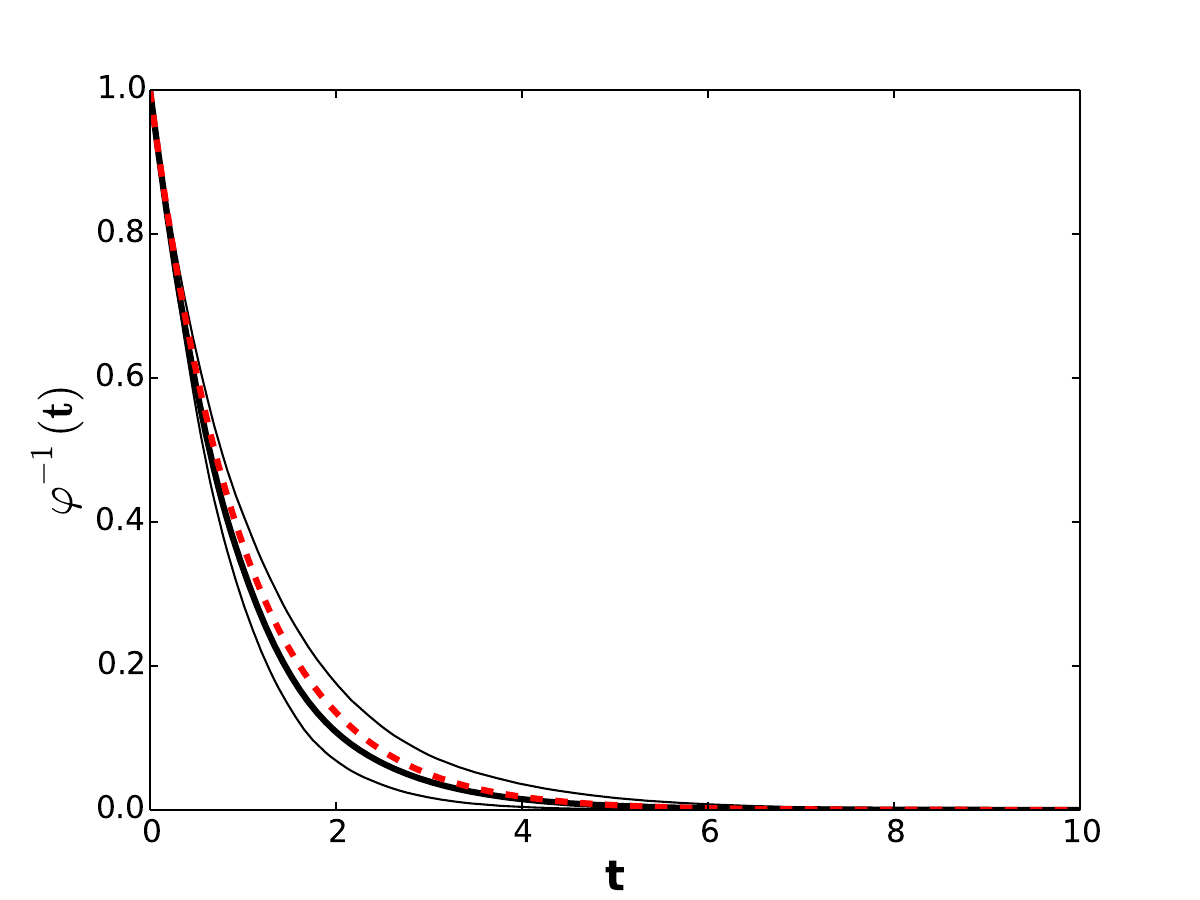}
\caption{\small{Posterior estimates of $h(t)$ and $\varphi^{-1}(t)$, obtained with a Log-$1$ partition of size $K=10$, for a simulated dataset of size $n=200$ from the product copula. Posterior mean (thick solid line), 95\% pointwise credible intervals (thin solid lines), and true function (dotted line).} \label{fig:prod}}
\end{center}
\end{figure}

\begin{figure}[H]
\begin{center}
\includegraphics[width=6.5cm,height=6.0cm]{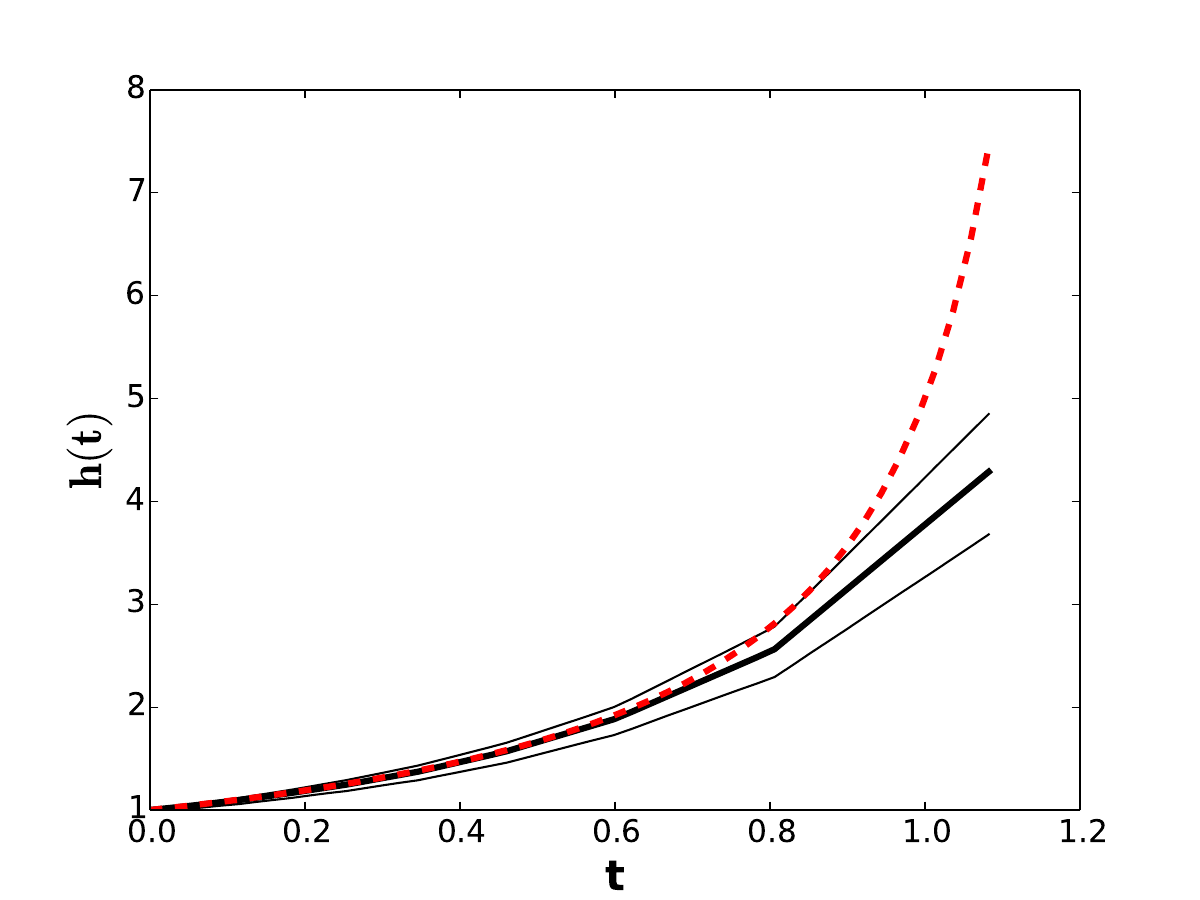}
\includegraphics[width=6.5cm,height=6.0cm]{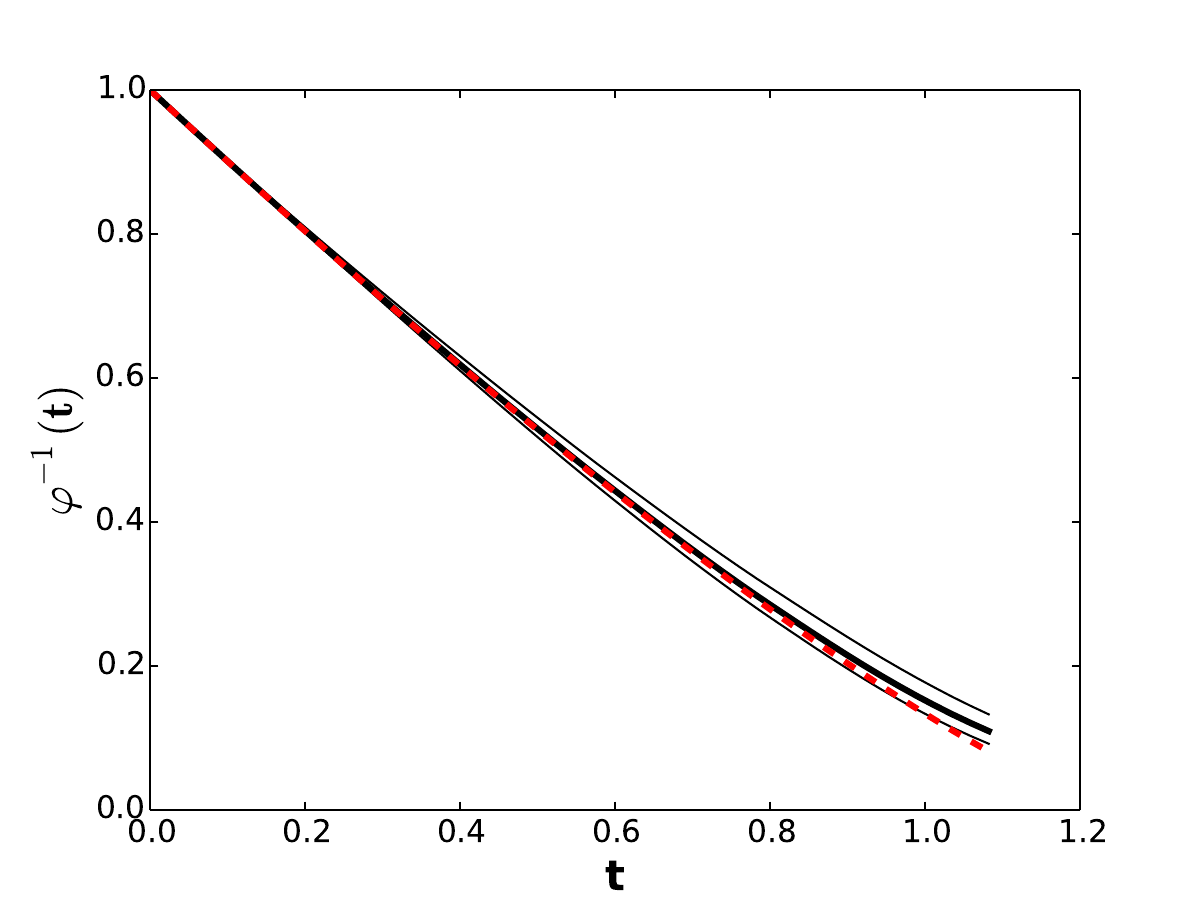}
\caption{\small{Posterior estimates of $h(t)$ and $\varphi^{-1}(t)$, obtained with a Log-$0.5$ partition of size $K=10$, for a simulated dataset of size $n=200$ from the Clayton copula with $\theta = -0.8$. Posterior mean (thick solid line), 95\% pointwise credible intervals (thin solid lines), and true function (dotted line).} \label{fig:clay_08}}
\end{center}
\end{figure}

\begin{figure}[H]
\begin{center}
\includegraphics[width=6.5cm,height=6.0cm]{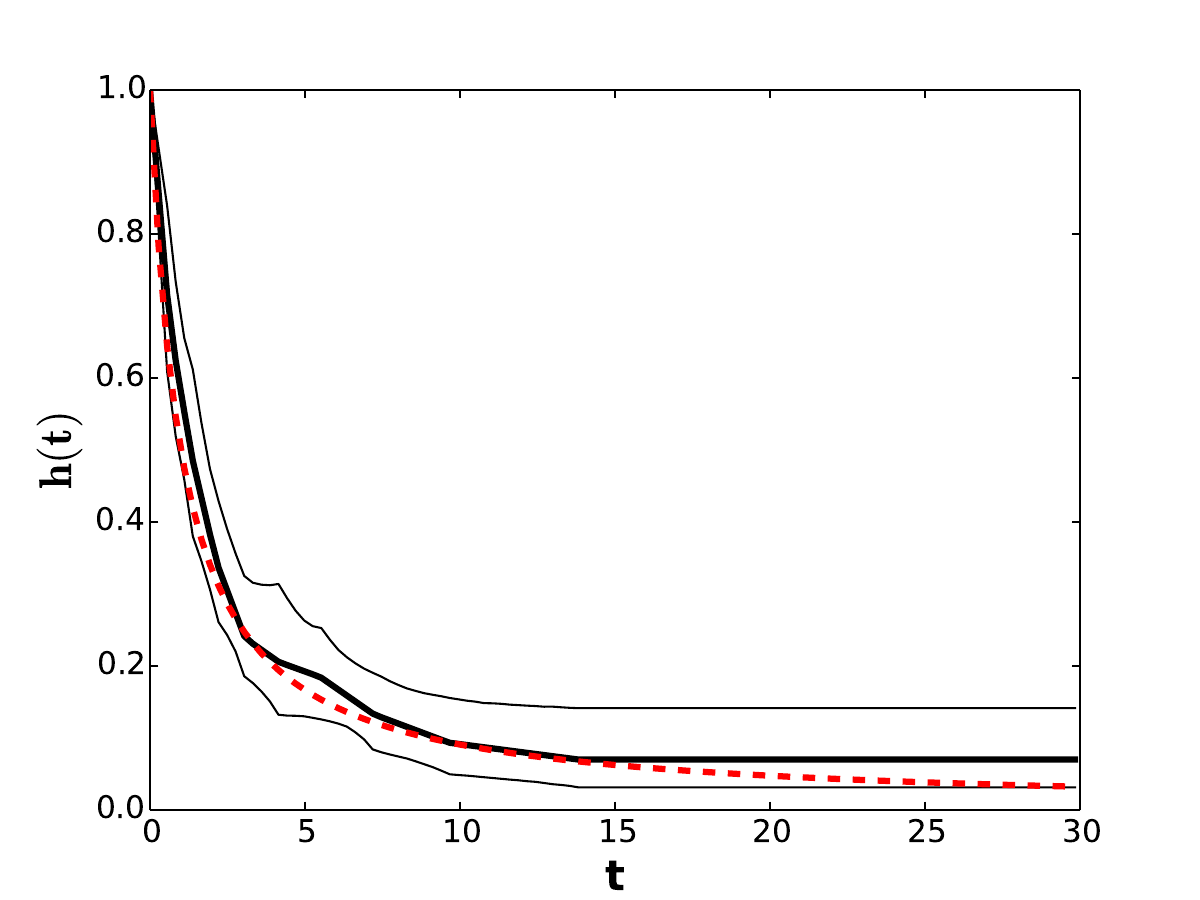}
\includegraphics[width=6.5cm,height=6.0cm]{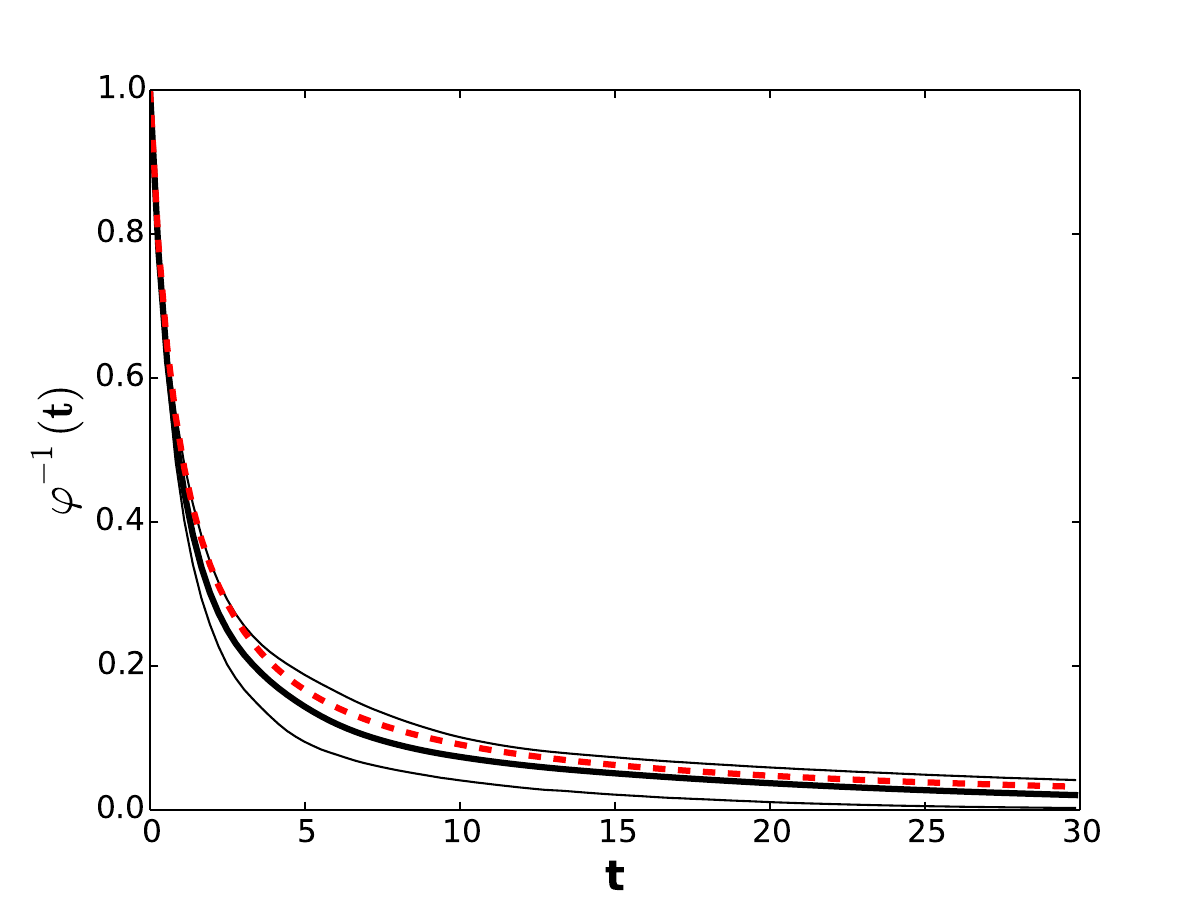}
\caption{\small{Posterior estimates of $h(t)$ and $\varphi^{-1}(t)$,  obtained with a Log-$6$ partition of size $K=10$, for a simulated dataset of size $n=200$ from the Clayton copula with $\theta = 1$. Posterior mean (thick solid line), 95\% pointwise credible intervals (thin solid lines), and true function (dotted line).} \label{fig:clay1}}
\end{center}
\end{figure}

\begin{figure}[H]
\begin{center}
\includegraphics[width=6.5cm,height=6.0cm]{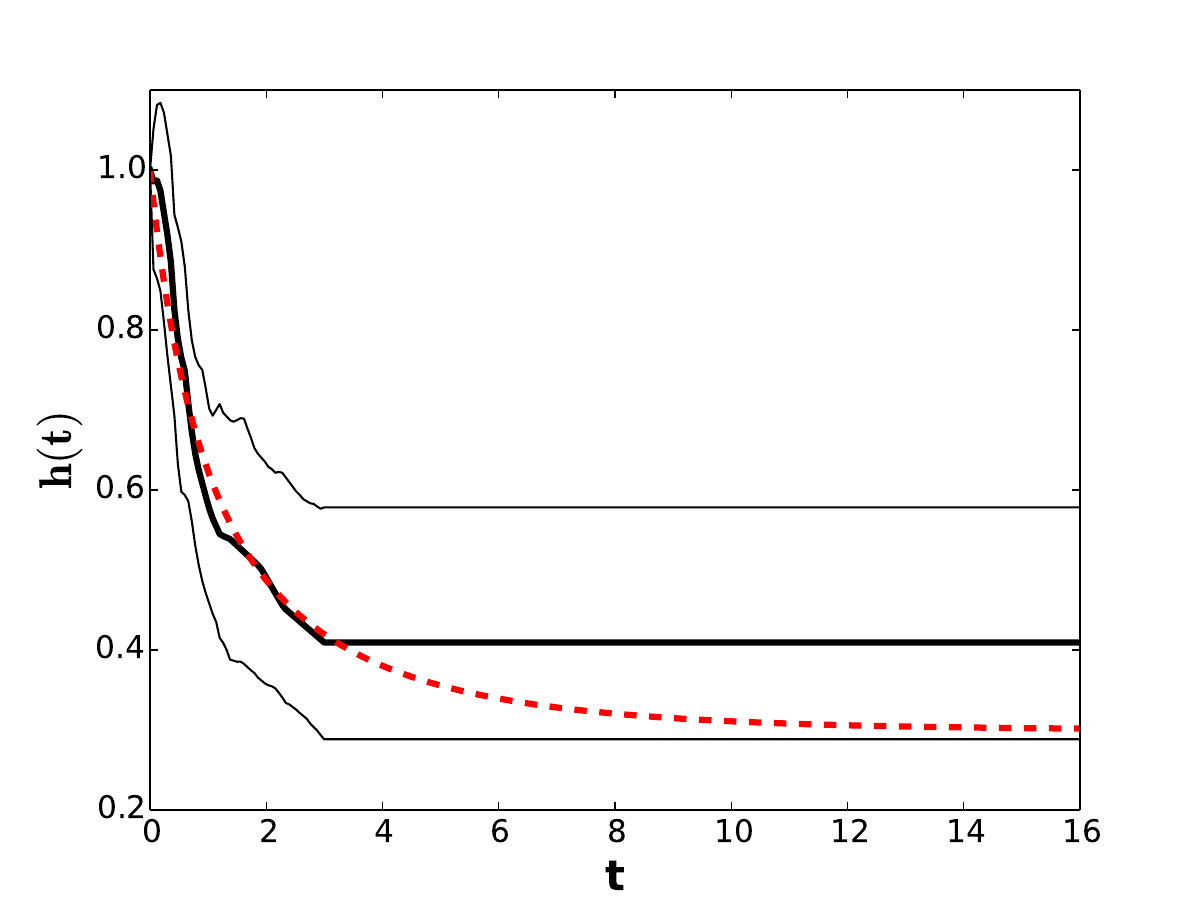}
\includegraphics[width=6.5cm,height=6.0cm]{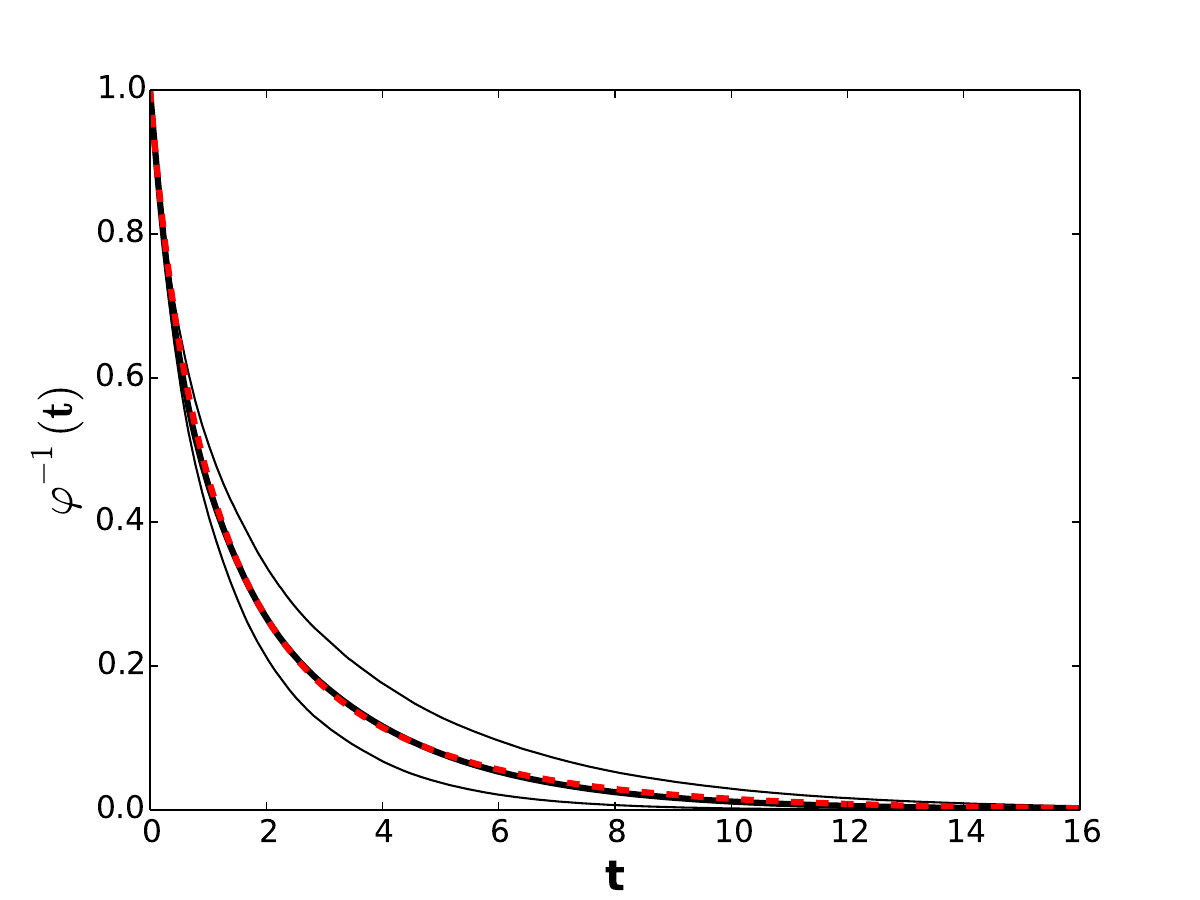}
\caption{\small{Posterior estimates of $h(t)$ and $\varphi^{-1}(t)$, obtained with a Log-$1$ partition of size $K=20$, for a simulated dataset of size $n=200$ from the AMH copula with $\theta = 0.7$. Posterior mean (thick solid line), 95\% pointwise credible intervals (thin solid lines), and true function (dotted line).} \label{fig:amh07}}
\end{center}
\end{figure}

\begin{figure}[H]
\begin{center}
\includegraphics[width=6.5cm,height=6.0cm]{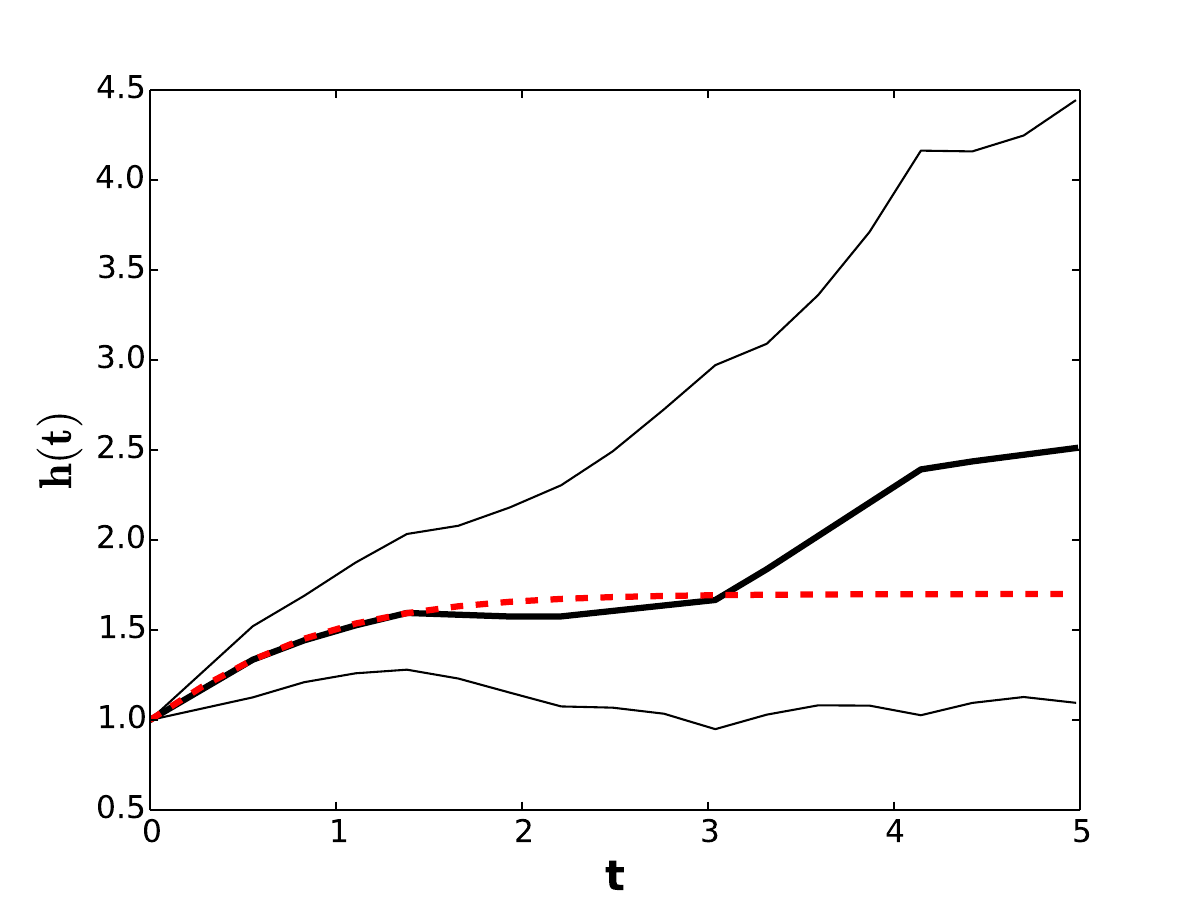}
\includegraphics[width=6.5cm,height=6.0cm]{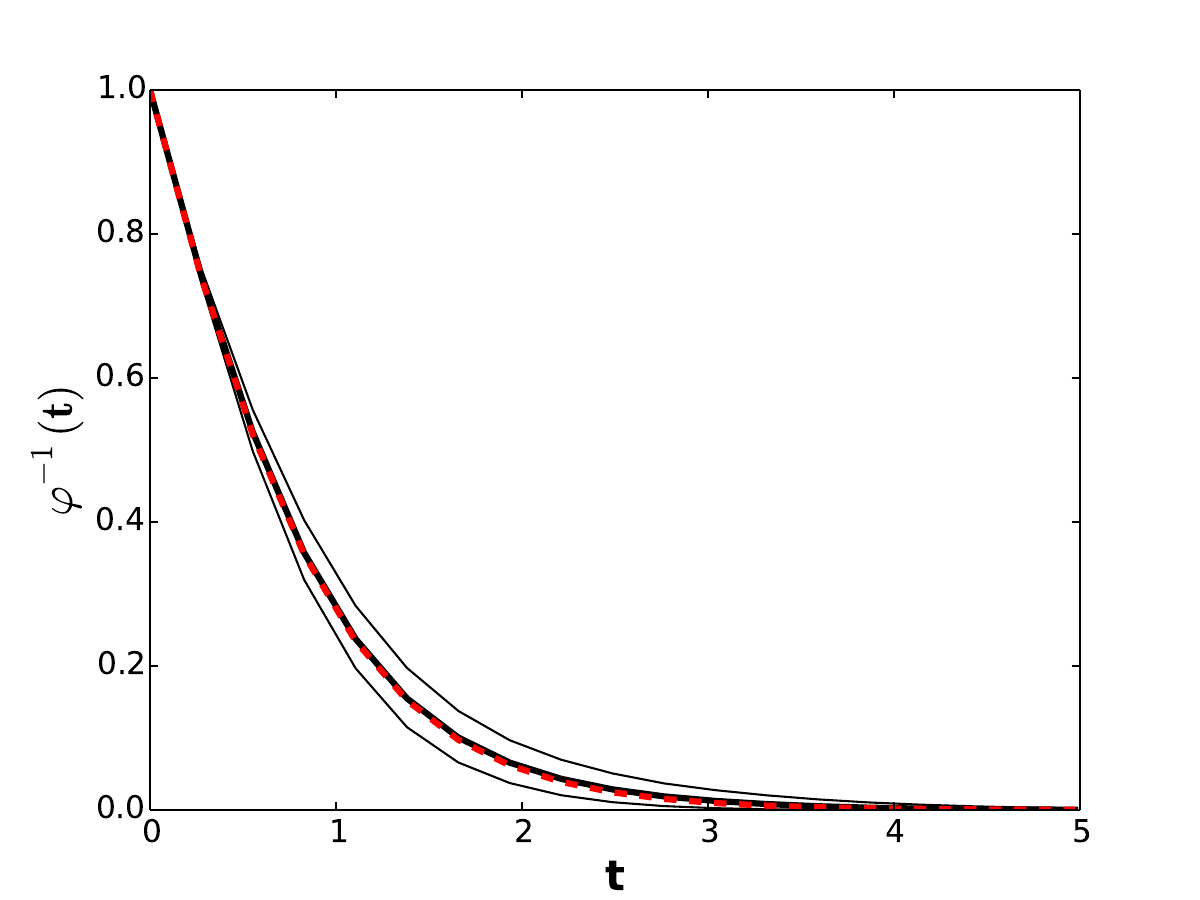}
\caption{\small{Posterior estimates of $h(t)$ and $\varphi^{-1}(t)$, obtained with a Log-$6$ partition of size $K=10$, for a simulated dataset of size $n=200$ from the AMH copula with $\theta = -0.7$. Posterior mean (thick solid line), 95\% pointwise credible intervals (thin solid lines), and true function (dotted line).} \label{fig:amh_07}}
\end{center}
\end{figure}

\begin{figure}[H]
\begin{center}
\includegraphics[width=6.5cm,height=6.0cm]{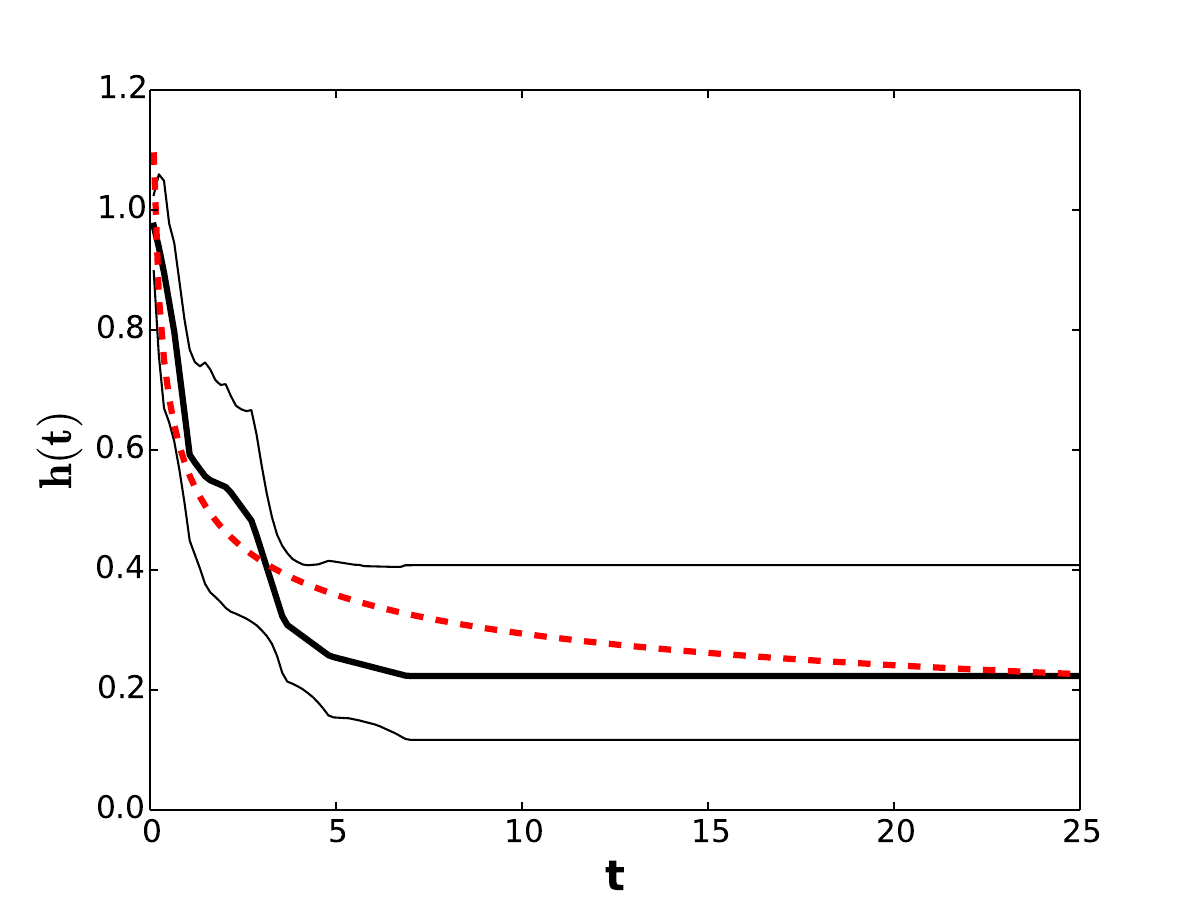}
\includegraphics[width=6.5cm,height=6.0cm]{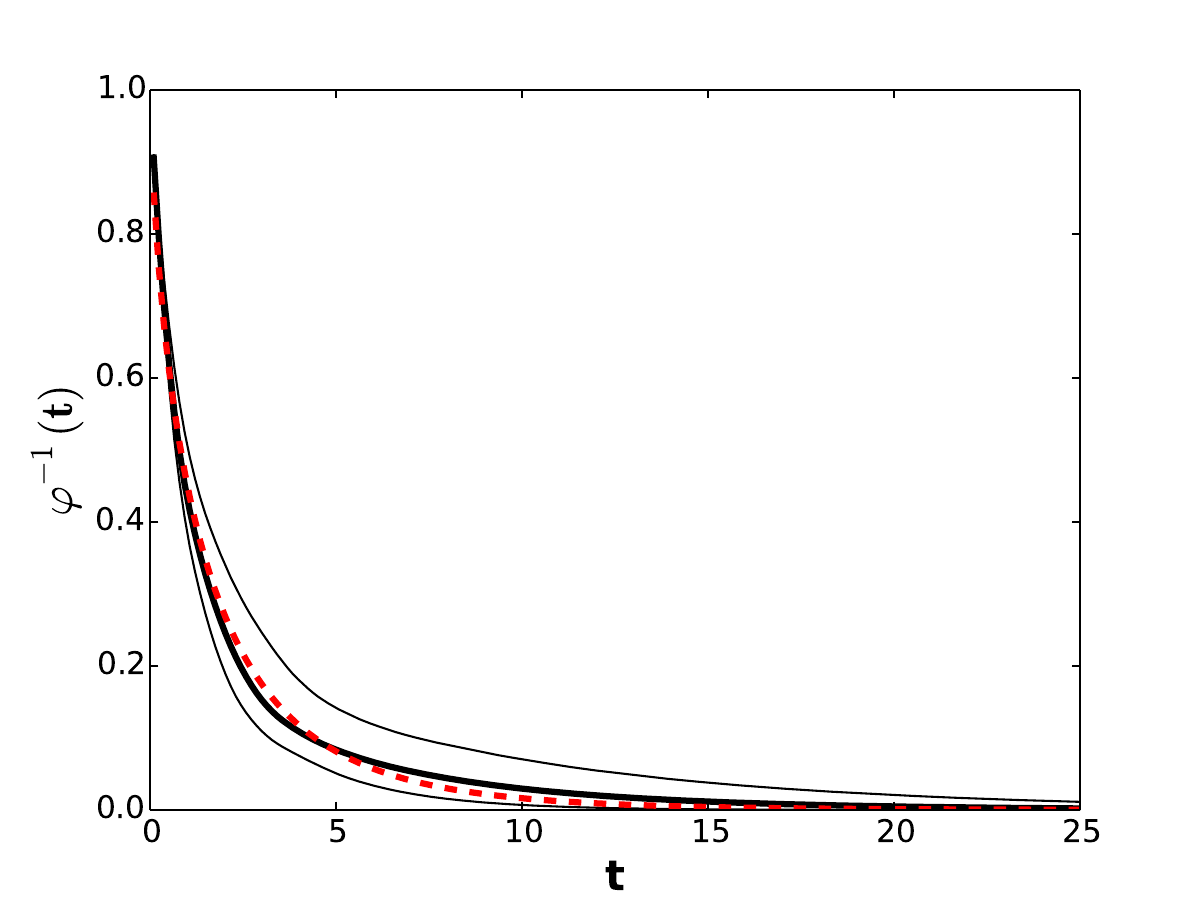}
\caption{\small{Posterior estimates of $h(t)$ and $\varphi^{-1}(t)$, obtained with a Log-$3$ partition of size $K=10$, for a simulated dataset of size $n=200$ from the Gumbel copula with $\theta = 1.4$. Posterior mean (thick solid line), 95\% pointwise credible intervals (thin solid lines), and true function (dotted line).} \label{fig:gumbel14}}
\end{center}
\end{figure}

\begin{figure}[H]
\begin{center}
\includegraphics[width=6.5cm,height=6.0cm]{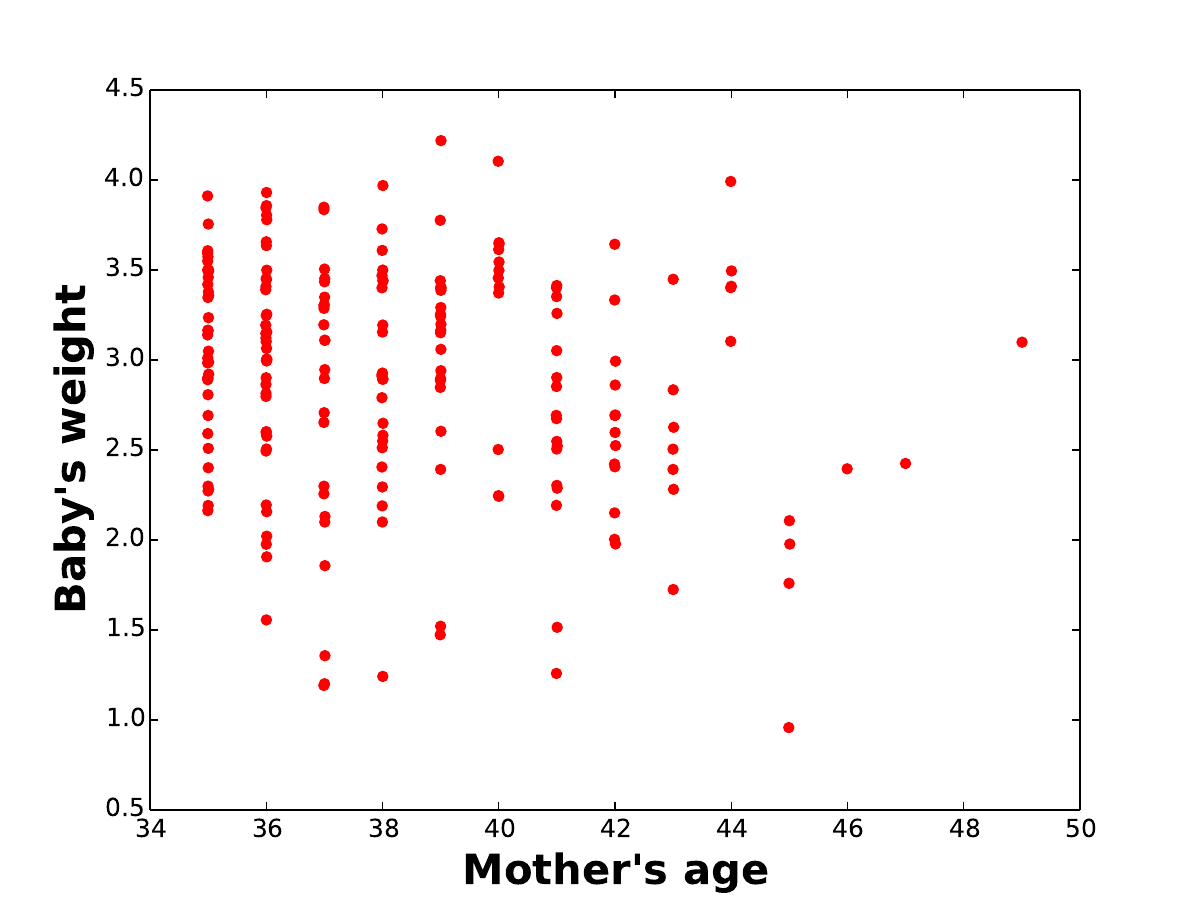}
\includegraphics[width=6.5cm,height=6.0cm]{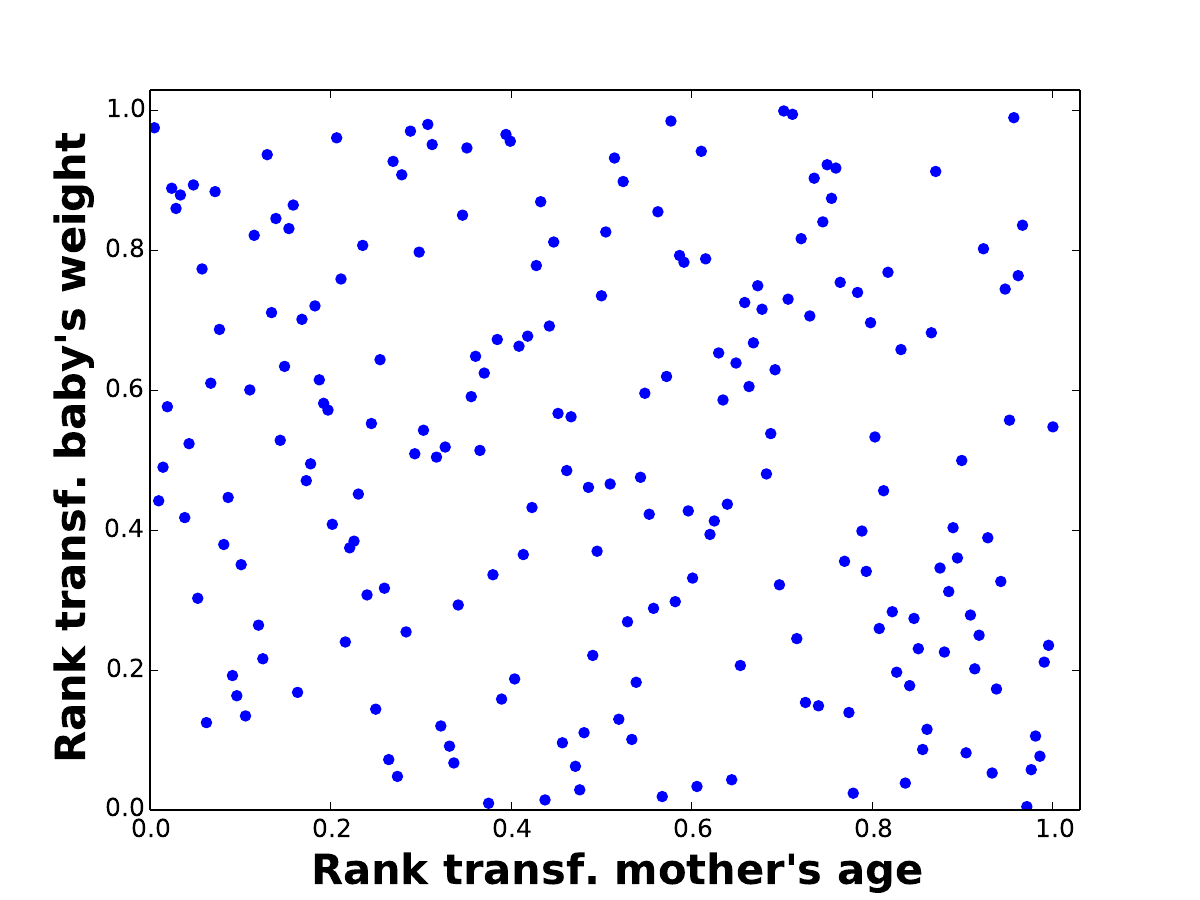}
\caption{\small{Scatter plots. Original data (left) and rank transformed data (right).} \label{fig:realdata}}
\end{center}
\end{figure}

\begin{figure}[H]
\begin{center}
\includegraphics[width=6.5cm,height=6.0cm]{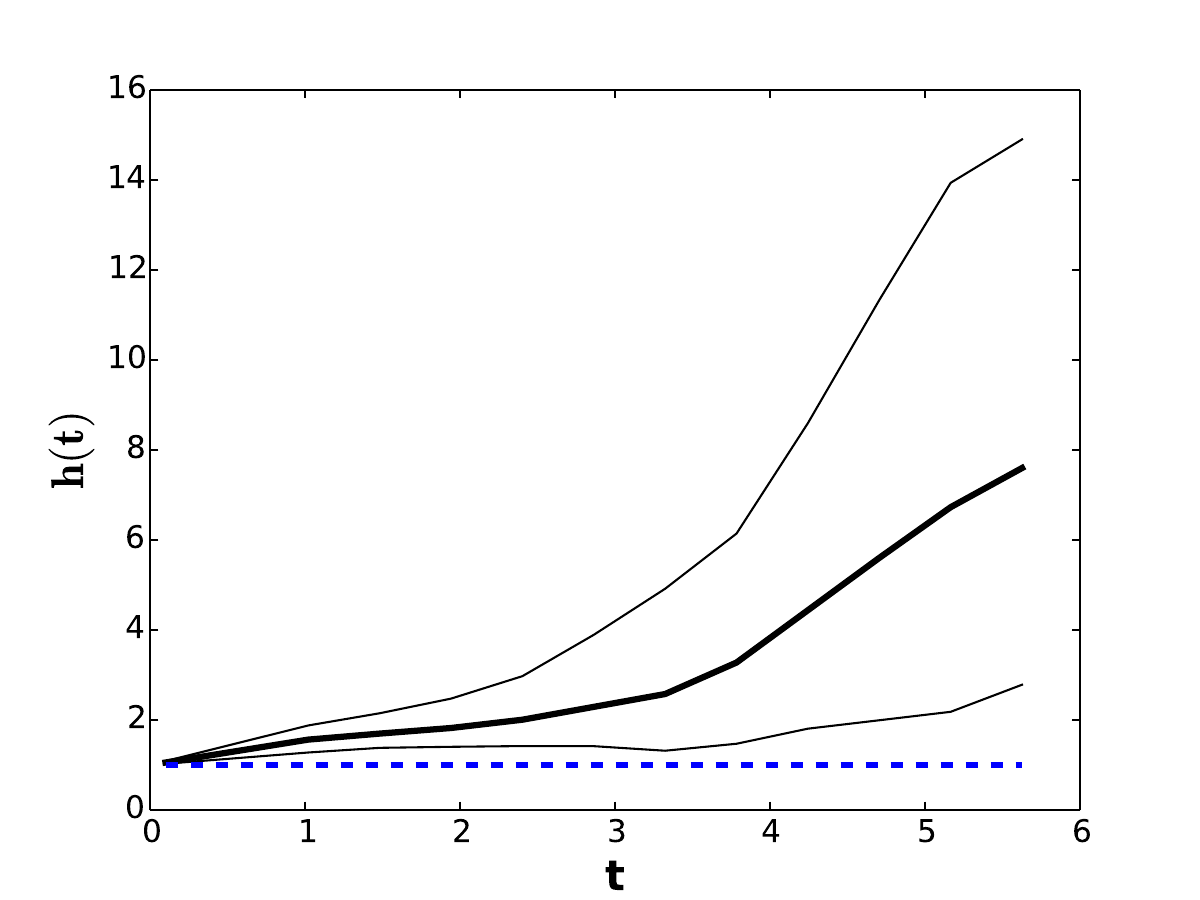}
\includegraphics[width=6.5cm,height=6.0cm]{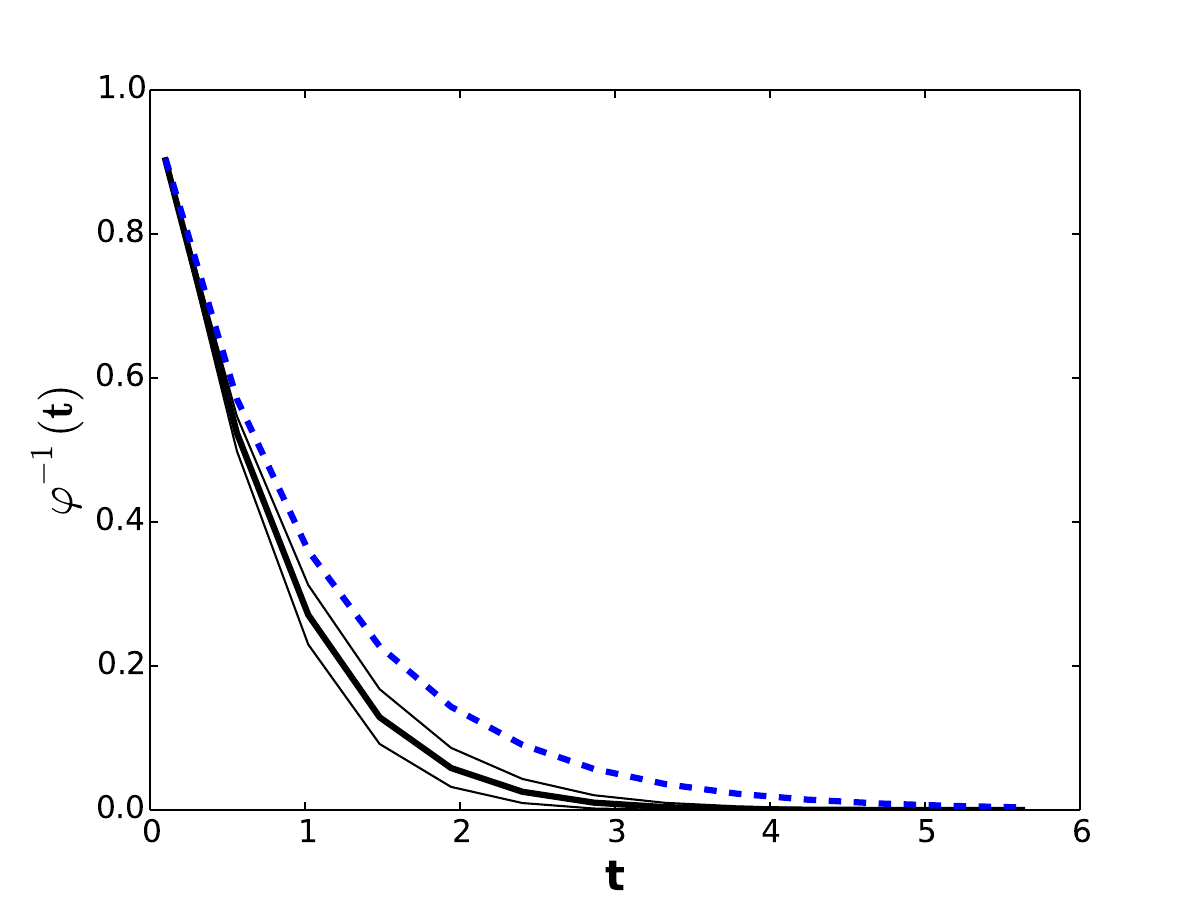}
\includegraphics[width=6.5cm,height=6.0cm]{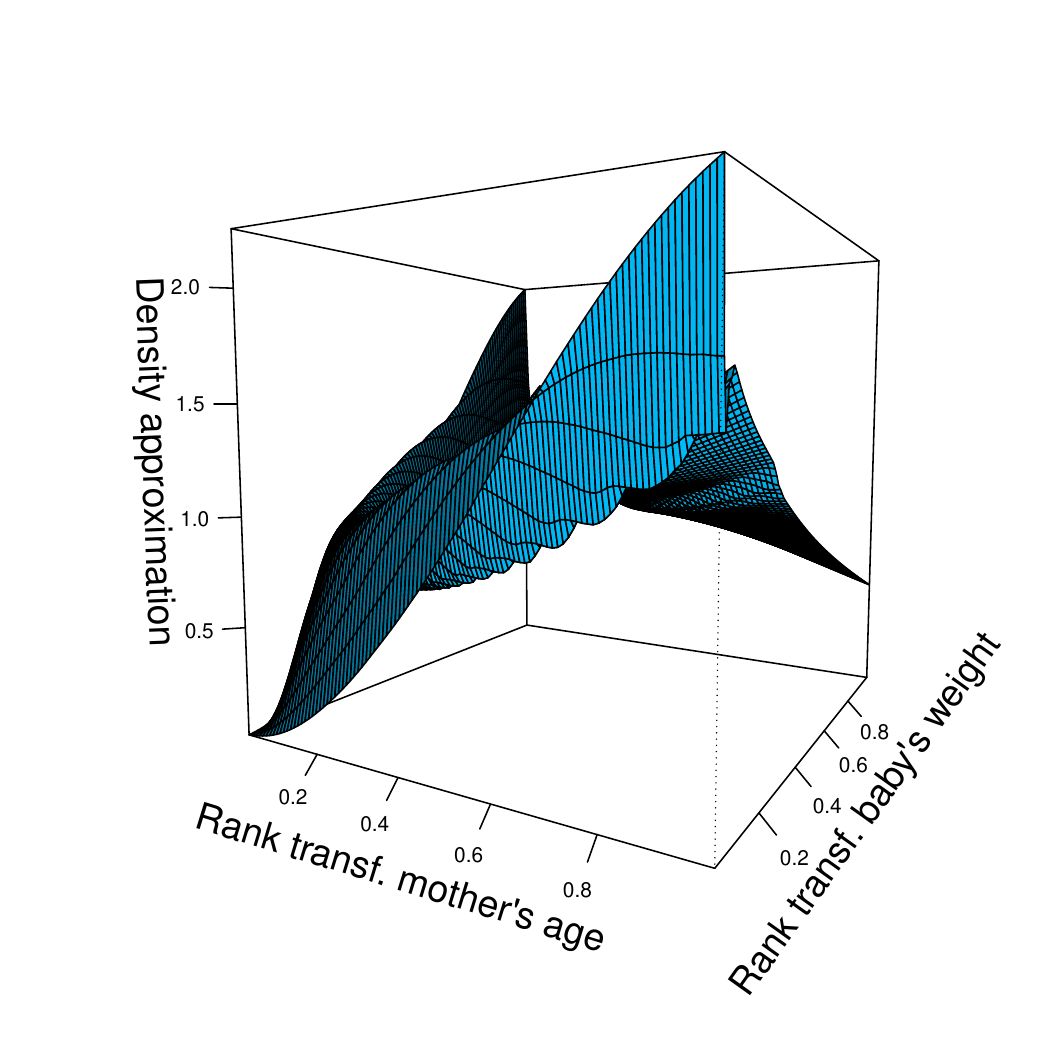}
\includegraphics[width=6.5cm,height=6.0cm]{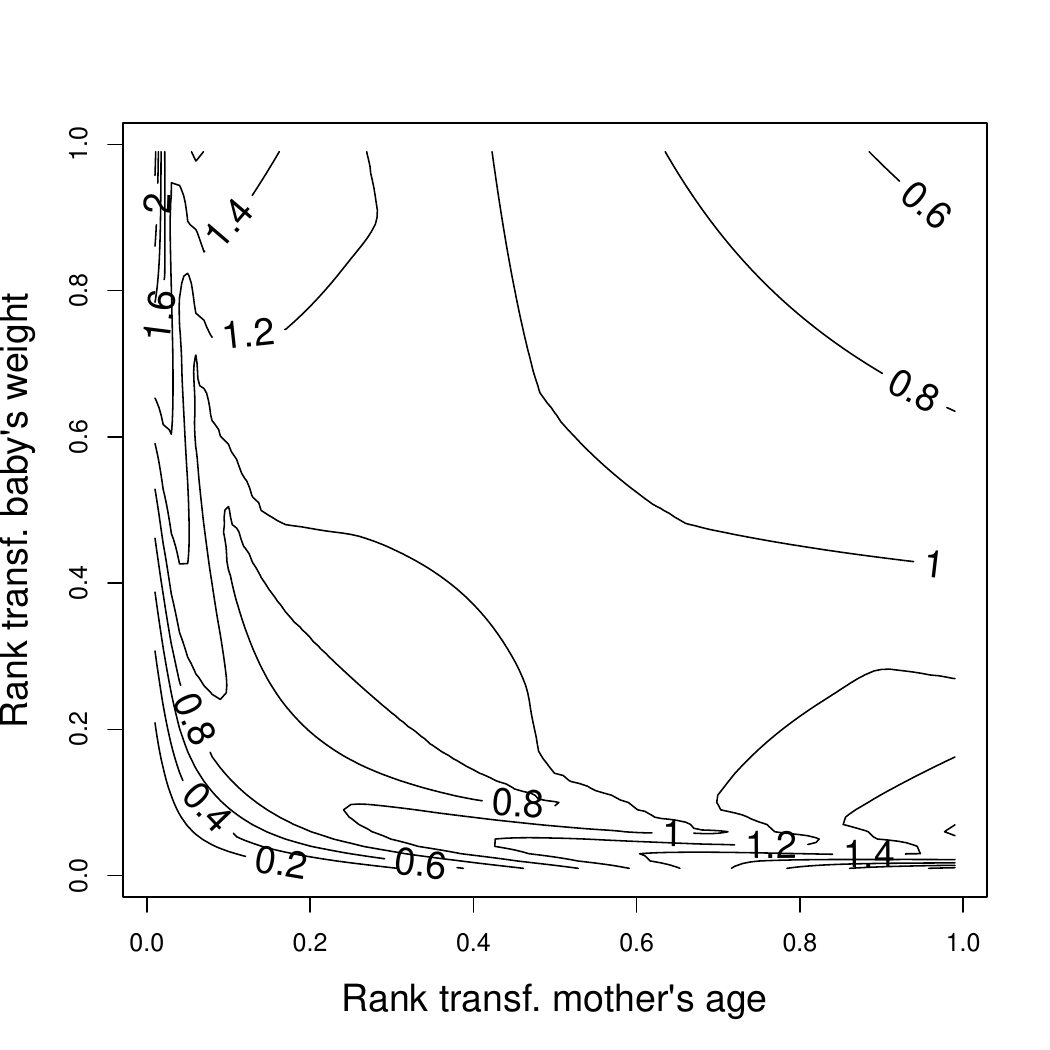}
\caption{\small{Posterior estimates obtained with a Log-$10$ partition of size $K=10$ for the real dataset. Top row: $h(t)$ and $\varphi^{-1}(t)$,
posterior mean (thick solid line) and 95\% pointwise credible intervals (thin solid lines). Corresponding functions from the product copula (dotted lines) are included for visual comparison. Bottom row: joint density and contour plot estimates.} \label{fig:perealdata}}
\end{center}
\end{figure}

\begin{table}[H]
\caption{Summary of some parametric Archimedean copulas parametrised such that $h(0)=1$, for the first three copulas, and $h(\epsilon)=1$, for the Gumbel copula.}
\label{tab:parcopulas}
\begin{center}
\begin{tabular}{cccccc}
\hline \hline
Copula & $\Theta$ & $\varphi(t)$ & $\varphi^{-1}(t)$ & Strict? & h(t)  \\
\hline 
Product & -  & $ - \log(t)$ & $e^{-t}$ & Yes & $1$  \\ 
Clayton & $[-1,\infty)$  & $\frac{1}{\theta}(t^{- \theta}-1)$ & $ (1+ \theta t)^{-1/\theta}$ & If $\theta \geq 0$ & $\frac{1}{1 + \theta t}$   \\ 
AMH & $[-1,1)$  & $\frac{1}{1-\theta}\log \left( \frac{1 - \theta + \theta t}{t} \right)$ & $ \frac{1- \theta}{e^{(1-\theta)t} - \theta}$ & Yes & $\frac{(1-\theta)e^{(1-\theta)t}}{e^{(1-\theta)t} - \theta}$ \\ 
Gumbel & $[1,\infty)$  & $ \epsilon\left(\frac{-\log(t)}{\epsilon\theta}\right)^{\theta}$ & $\exp\left\{-\epsilon\theta\left(\frac{t}{\epsilon}\right)^{1/\theta}\right\}$ & Yes & $\left(\frac{\epsilon}{t}\right)^{1- 1/\theta}$  \\ \hline \hline
\end{tabular}
\end{center}
\end{table}

\begin{table}[H]
\caption{GOF measures, obtained with different partition definitions, for a simulated dataset of size $n=200$ from the product copula.}
\label{tab:prod}
\normalsize{
\sisetup{round-mode=places,round-precision=3}
\begin{center}
\begin{tabular}{cccccccc}\hline \hline
Part.Type & $K$ & $Q_{\kappa_\tau}^{(0.025)}$ & $\hat{\kappa}_\tau$ & $Q_{\kappa_\tau}^{(0.975)}$ & $\kappa_\tau$ &  Sup.Norm & LPML \\
\hline 
\csvreader[late after line=\\]{ResumenProducto.csv}{Type=\Type, Size=\Size, 25Tau=\25Tau, Mtau=\Mtau, 975Tau=\975Tau, RealTau=\RealTau, Norm=\Norm, LPML=\LPML}%
{\Type & \Size & \num{\25Tau} & \num{\Mtau} & \num{\975Tau} & \num{\RealTau}  & \num{\Norm} & \num{\LPML}}%
\hline \hline
\end{tabular}
\end{center}
}\end{table}

\begin{table}[H]
\caption{GOF measures, obtained with different partition definitions, for a simulated dataset of size $n=200$ from the Clayton copula with $\theta = -0.8$.}
\label{tab:clay_08}
\normalsize{
\sisetup{round-mode=places,round-precision=3}
\begin{center}
\begin{tabular}{cccccccc}\hline \hline
Part.Type & $K$ & $Q_{\kappa_\tau}^{(0.025)}$ & $\hat{\kappa}_\tau$ & $Q_{\kappa_\tau}^{(0.975)}$ & $\kappa_\tau$ &  Sup.Norm & LPML \\
\hline 
\csvreader[late after line=\\]{ResumenClayton-08.csv}{Type=\Type, Size=\Size, 25Tau=\25Tau, Mtau=\Mtau, 975Tau=\975Tau, RealTau=\RealTau, Norm=\Norm, LPML=\LPML}%
{\Type & \Size & \num{\25Tau} & \num{\Mtau} & \num{\975Tau} & \num{\RealTau}  & \num{\Norm} & \num{\LPML}}%
\hline \hline
\end{tabular}
\end{center}
}\end{table}

\begin{table}[H]
\caption{GOF measures, obtained with different partition definitions, for a simulated dataset of size $n=200$ from the Clayton copula with $\theta = 1$.}
\label{tab:clay1}
\normalsize{
\sisetup{round-mode=places,round-precision=3}
\begin{center}
\begin{tabular}{cccccccc}\hline\hline
Part.Type & $K$ & $Q_{\kappa_\tau}^{(0.025)}$ & $\hat{\kappa}_\tau$ & $Q_{\kappa_\tau}^{(0.975)}$ & $\kappa_\tau$ &  Sup.Norm & LPML \\
\hline 
\csvreader[late after line=\\]{ResumenClayton1.csv}{Type=\Type, Size=\Size, 25Tau=\25Tau, Mtau=\Mtau, 975Tau=\975Tau, RealTau=\RealTau, Norm=\Norm, LPML=\LPML}%
{\Type & \Size & \num{\25Tau} & \num{\Mtau} & \num{\975Tau} & \num{\RealTau}  & \num{\Norm} & \num{\LPML}}%
\hline \hline
\end{tabular}
\end{center}
}\end{table}

\begin{table}[H]
\caption{GOF measures, obtained with different partition definitions, for a simulated dataset of size $n=200$ from the AMH copula with $\theta = -0.7$.}
\label{tab:amh_07}
\normalsize{
\sisetup{round-mode=places,round-precision=3}
\begin{center}
\begin{tabular}{cccccccc}\hline\hline
Part.Type & $K$ & $Q_{\kappa_\tau}^{(0.025)}$ & $\hat{\kappa}_\tau$ & $Q_{\kappa_\tau}^{(0.975)}$ & $\kappa_\tau$ &  Sup.Norm & LPML \\
\hline 
\csvreader[late after line=\\]{ResumenAMH-07.csv}{Type=\Type, Size=\Size, 25Tau=\25Tau, Mtau=\Mtau, 975Tau=\975Tau, RealTau=\RealTau, Norm=\Norm, LPML=\LPML}%
{\Type & \Size & \num{\25Tau} & \num{\Mtau} & \num{\975Tau} & \num{\RealTau}  & \num{\Norm} & \num{\LPML}}%
\hline \hline
\end{tabular}
\end{center}
}\end{table}

\begin{table}[H]
\caption{GOF measures, obtained with different partition definitions, for a simulated dataset of size $n=200$ from the AMH copula with $\theta = 0.7$.}
\label{tab:amh07}
\normalsize{
\sisetup{round-mode=places,round-precision=3}
\begin{center}
\begin{tabular}{cccccccc}\hline\hline
Part.Type & $K$ & $Q_{\kappa_\tau}^{(0.025)}$ & $\hat{\kappa}_\tau$ & $Q_{\kappa_\tau}^{(0.975)}$ & $\kappa_\tau$ &  Sup.Norm & LPML \\
\hline 
\csvreader[late after line=\\]{ResumenAMH07.csv}{Type=\Type, Size=\Size, 25Tau=\25Tau, Mtau=\Mtau, 975Tau=\975Tau, RealTau=\RealTau, Norm=\Norm, LPML=\LPML}%
{\Type & \Size & \num{\25Tau} & \num{\Mtau} & \num{\975Tau} & \num{\RealTau}  & \num{\Norm} & \num{\LPML}}%
\hline \hline
\end{tabular}
\end{center}
}\end{table}

\begin{table}[H]
\caption{GOF measures, obtained with different partition definitions, for a simulated dataset of size $n=200$ from the Gumbel copula with $\theta = 1.4$.}
\label{tab:gumbel14}
\normalsize{
\sisetup{round-mode=places,round-precision=3}
\begin{center}
\begin{tabular}{cccccccc}\hline\hline
Part.Type & $K$ & $Q_{\kappa_\tau}^{(0.025)}$ & $\hat{\kappa}_\tau$ & $Q_{\kappa_\tau}^{(0.975)}$ & $\kappa_\tau$ &  Sup.Norm & LPML \\
\hline 
\csvreader[late after line=\\]{ResumenGumbel1_4.csv}{Type=\Type, Size=\Size, 25Tau=\25Tau, Mtau=\Mtau, 975Tau=\975Tau, RealTau=\RealTau, Norm=\Norm, LPML=\LPML}%
{\Type & \Size & \num{\25Tau} & \num{\Mtau} & \num{\975Tau} & \num{\RealTau}  & \num{\Norm} & \num{\LPML}}%
\hline \hline
\end{tabular}
\end{center}
}\end{table}

\begin{table}[H]
\caption{GOF measures, obtained with different partition definitions, for  the real data.}
\label{tab:realdata}
\normalsize{
\sisetup{round-mode=places,round-precision=3}
\begin{center}
\begin{tabular}{ccccccc}\hline\hline
Part.Type & $K$ & $Q_{\kappa_\tau}^{(0.025)}$ & $\hat{\kappa}_\tau$ & $Q_{\kappa_\tau}^{(0.975)}$ & Sample $\tilde{\kappa}_\tau$ &  LPML \\
\hline
\csvreader[late after line=\\]{ResumenDatosb.csv}{Type=\Type, Size=\Size, 25Tau=\25Tau, Mtau=\Mtau, 975Tau=\975Tau, RealTau=\RealTau, Norm=\Norm, LPML=\LPML}%
{\Type & \Size & \num{\25Tau} & \num{\Mtau} & \num{\975Tau} & \num{\RealTau} & \num{\LPML}}%
\hline\hline
\end{tabular}
\end{center}
}\end{table}

\begin{table}[H]
\caption{Independence test for all simulated and real datasets. semiparametric Archimedean copula test (SPAC), contingency table test (CT), mixture model test (MM), and empirical copula test (EC). For the Bayesian tests we report $\P(H_1\mid\data)$ and for the frequentist test we report p-values.}
\label{tab:indep}
\normalsize{
\begin{center}
\begin{tabular}{cr|ccc|c}\hline\hline
 & & \multicolumn{3}{c|}{Bayesian Tests} & Freq.Test \\
Dataset & $\theta$ & SPAC & CT & MM & EC \\
\hline
Product & -- & 0.19 & 0.04 & 0.12 & 0.15 \\
Clayton & $-0.8$ & 1.00 & 1.00 & 0.99 & 0.00 \\
Clayton & $-0.4$ & 0.96 & 0.65 & 0.83 & 0.00 \\
Clayton & $0.6$ & 0.96 & 0.43 & 0.79 & 0.01 \\
Clayton & $1.0$ & 1.00 & 0.87 & 0.75 & 0.00 \\
AMH & $-0.7$ & 0.63 & 0.11 & 0.21 & 0.31 \\
AMH & $-0.3$ & 0.23 & 0.05 &  0.08 & 0.79 \\
AMH & $0.3$ & 0.22 & 0.06 & 0.09 & 0.81 \\
AMH & $0.7$ & 0.86 & 0.38 & 0.48 & 0.33 \\
Gumbel & $1.4$ & 1.00 & 0.50 & 0.75 & 0.00 \\
Gumbel &$2.0$ & 1.00 & 0.97 & 0.93 & 0.00 \\
Real data & -- & 0.47 & 0.07 & 0.40 & 0.35 \\
\hline\hline
\end{tabular}
\end{center}
}\end{table}

\end{document}